\documentclass[11pt,final]{amsart}

\usepackage{amssymb,amsmath,latexsym}
\usepackage{amsthm}
\usepackage{amsfonts}
\usepackage{enumerate}
\usepackage{booktabs}
\usepackage{mathrsfs}

\usepackage{manfnt}

\usepackage{hyperref}
\usepackage{xcolor}
\hypersetup{
    colorlinks, linkcolor={red!80!black},
    citecolor={blue!80!black}, urlcolor={blue}
}
\usepackage[color]{showkeys}
\definecolor{refkey}{gray}{.75}
\definecolor{labelkey}{gray}{.75}

\usepackage{graphicx,pgf}
\usepackage{float}

\usepackage{caption}
\usepackage{subcaption}

\usepackage{fancyhdr}

\newtheorem{theorem}{Theorem}[section]

\newtheorem{proposition}{Proposition}[section]
\newtheorem{lemma}{Lemma}[section]
\newtheorem{remark}{Remark}[section]

\newcommand{\aiminset}[1]{\left\{#1 \right\}}
\newcommand{\aiminabs}[1]{\lvert #1 \rvert}
\newcommand{\aiminnorm}[1]{\| #1 \|}
\newcommand{\aimininner}[2]{\langle #1, #2 \rangle}

\numberwithin{equation}{section}
\numberwithin{figure}{section}

\setlength{\hoffset}{-.5in}
\setlength{\textwidth}{6.0in}

\setlength{\headheight}{15.2pt}
\pagestyle{fancy}
\fancyhf{}

\let\oldtocsection=\tocsection
\let\oldtocsubsection=\tocsubsection
\let\oldtocsubsubsection=\tocsubsubsection
\renewcommand{\tocsection}[2]{\hspace{0em}\oldtocsection{#1}{#2}}
\renewcommand{\tocsubsection}[2]{\hspace{1.8em}\oldtocsubsection{#1}{#2}}
\renewcommand{\tocsubsubsection}[2]{\hspace{4.5em}\oldtocsubsubsection{#1}{#2}}

\fancyhead[LE,RO]{\thepage}

\subjclass[2010]{35Q30, 35Q31, 34A12, 76N10}
\keywords{Boussinesq system, Euler equations, Existence and uniqueness, Yudovich's type data, initial-boundary value problem}
\thanks{This work was partially supported by the National Science Foundation under the grant NSF DMS-1206438 and by the Research Fund of Indiana University.}

\begin{document}
\title{The 2D Euler-Boussinesq equations in planar polygonal domains with Yudovich's type data}
\author{Aimin Huang}
\address{The Institute for Scientific Computing and Applied Mathematics, Indiana University, 831 East Third Street, Rawles Hall, Bloomington, Indiana 47405, U.S.A.}
\email{huangepn@gmail.com}
\date{\today}

\begin{abstract}
We address the well-posedness of the 2D (Euler)-Boussinesq equations with zero viscosity and positive diffusivity in the polygonal-like domains with Yudovich's type data, which gives a positive answer to part of the questions raised in \cite{LPZ11}. Our analysis on the the polygonal-like domains essentially relies on the recent elliptic regularity results for such domains proved in \cite{BDT13, DT13}.
\end{abstract}

\maketitle

\setcounter{tocdepth}{2}
\tableofcontents
\addtocontents{toc}{~\hfill\textbf{Page}\par}

\section{Introduction}
Motivated by the well-posedness results for the 2D Euler equations in non-smooth domains in \cite{BDT13, DT13} and the questions about the Boussinesq system over non-smooth domains raised in \cite[Section~4]{LPZ11}, in this article, we aim to address the global well-posedness of the 2D Euler-Boussinesq equations in a non-smooth domain $\Omega\subset \mathbb R^2$.
The 2D Euler-Boussinesq equations describing the evolution of mass and heat flow of the inviscid incompressible fluid read in the non-dimensional form:
\begin{equation}\begin{cases}\label{eq1.1.1}
	\partial_t\boldsymbol u + \boldsymbol u\cdot \nabla \boldsymbol u + \nabla\pi = T \boldsymbol e_2,\quad \boldsymbol e_2=(0,1),\\
	{\mathrm{div}\,}\boldsymbol u=0,\\
	\partial_t T  - \kappa \Delta T + \boldsymbol u\cdot \nabla T =0,
\end{cases}\end{equation}
where $(x,y)\in\Omega$, $t\in(0, t_1)$, $\boldsymbol u=(u_1,u_2)$ and $T$ denote the velocity field and the temperature of the fluid respectively, and $\pi$ stands for the pressure and $\kappa> 0$ is the thermal diffusivity.
We associate with \eqref{eq1.1.1} the following initial and boundary conditions:
\begin{equation}\begin{cases}\label{eq1.1.2}
	\boldsymbol u(0,x,y)=\boldsymbol u_0(x,y),\quad\quad T(0,x,y)=T_0(x,y),\\
	\boldsymbol u(t,x,y)\cdot \boldsymbol n=0,\qquad (x,y)\in\partial\Omega,\\
	T=\eta,\qquad (x,y)\in\partial\Omega,
\end{cases}\end{equation}
where $\boldsymbol n$ is the outward unit normal vector to $\partial\Omega$ and $\boldsymbol u_0, T_0$ and $\eta$ are the given initial and boundary data. We also denote by $\boldsymbol\tau$ the unit tangent vector to $\partial\Omega$.

The general 2D Boussinesq system with full viscosity $\nu$ and diffusivity $\kappa$ reads
\begin{equation*}\begin{cases}
	\partial_t\boldsymbol u -\nu\Delta\boldsymbol u+ \boldsymbol u\cdot \nabla \boldsymbol u + \nabla\pi = T \boldsymbol e_2,\quad \boldsymbol e_2=(0,1),\\
	{\mathrm{div}\,}\boldsymbol u=0,\\
	\partial_t T  - \kappa \Delta T + \boldsymbol u\cdot \nabla T=0.
\end{cases}\end{equation*}
From the mathematical point of view, the global well-posedness, global regularity as well as the existence of the global attractor of the Boussinesq system have been widely studied, see for example \cite{CD80, FMT87, Guo89, ES94, CN97, MZ97, Wang05, CLR06, Wang07, Xu09, KTW11, CW12}.
Recently, there are many works devoted to the study of the 2D Boussinesq system with partial viscosity, see for example \cite{HL05, Cha06, HK07, DP09, HK09,  HKR11} in the whole space $\mathbb R^2$ and \cite{Zha10, LPZ11, HKZ13} in bounded smooth domains. There are also many works which considered the case when only the horizontal viscosity and vertical viscosity is present, see for example \cite{ACW11, DP11, CW13, MZ13}.
However, the global regularity for the 2D Boussinesq system when $\nu=\kappa=0$ is still an outstanding open problem and to the best of our knowledge, the well-posedness issue regarding the 2D Euler-Boussinesq system \eqref{eq1.1.1} in non-smooth domains has not yet been addressed in the literatures, which is the goal of this article. In some realistic applications, the variation of the fluid viscosity and thermal diffusivity with the temperature may not be disregarded (see for example \cite{LB96} and references therein) and there are many works on this direction, too, see for example \cite{LB96, LB99, SZ13, LPZ13, Hua14} where the existence of weak solutions, global regularity, and existence of global attractor have been studied.

It is well known that the standard 2D Euler equations is globally well-posedness if the initial data satisfies Yudovich's type condition, see \cite{Yud63, Yud95, Kel11}. Roughly speaking, if the initial vorticity is bounded or unbounded but with small growth rate of the $L^p$-norm, then the 2D Euler equations exist a global unique solution and recently this result has been extended to non-smooth domains in \cite{BDT13, DT13}.
Note that the global well-posedness for the 2D Euler-Boussinesq system has been studied in \cite{DP09} with Yudovich's type data for the whole space $\mathbb R^2$ and also studied in \cite{Zha10} with $H^3$-regular data for bounded smooth domains.
Here, we would like to establish the global well-posedness result for the 2D Euler-Boussinesq system in non-smooth domains with Yudovich's type data, which generalizes the results in \cite{DP09, Zha10} and gives a definite answer to part of questions asked in \cite{LPZ11}. We also remark that the author in \cite{Zha10} only studied the case when the boundary data is constant while here we will consider  arbitrary boundary data for the 2D Euler-Boussinesq system.

In this article, we are interested in the polygonal-like (non-smooth) domains with maximum aperture $\max \alpha_j\leq \pi/2$ due to the elliptic regularity result are only available for such domains (see \eqref{eq1.5.5} below). Here, a domain $\Omega\subset\mathbb R^2$ is said to be a polygonal-like domain if it is a bounded simply connected open set and the boundary $\partial\Omega$ is enclosed by piecewise $\mathcal C^{2}$ planar curves, with finitely many points $\{O_j\}_{j=1}^N$ of discontinuity for the tangent vector, and such that, in some neighborhood of each point $O_j$, $\Omega$ coincides with the cone of vertex $O_j$ and aperture $\alpha_j\in (0, 2\pi)$.

In order to deal with the non-homogeneous boundary conditions on $\partial\Omega$ for the temperature $T$, we use a classical lifting result (see e.g. \cite[Theorem~1.5.1.3]{Gri85}).
We assume that the boundary data $\eta$ is independent of time $t$ for the sake of simplicity.
\begin{lemma}\label{lem2.3.1}
	Let $\Omega\subset\mathbb R^2$ be a polygonal-like domain and assume that $\eta\in H^{3/2}(\partial\Omega)$. Then there exists a function $S\in H^2(\Omega)$ satisfying that
	\[
	S=\eta,\quad \frac{\partial S}{\partial \boldsymbol n}=0,\quad\text{ on }\partial\Omega,
	\]
	and the estimate
	\[
	\aiminnorm{ S }_{ H^2(\Omega)} \leq c_1\aiminnorm{ \eta }_{H^{3/2}(\partial\Omega) },
	\]
	holds for some constant $c_1>0$.
\end{lemma}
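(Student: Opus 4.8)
The statement is a double-trace lifting result: I want a single $H^2$ function whose Dirichlet trace is the prescribed $\eta$ and whose normal derivative vanishes on $\partial\Omega$. The plan is to realize $S$ as the image of the pair $(\eta,0)$ under a bounded right inverse of the trace map $u\mapsto(u|_{\partial\Omega},\,\partial u/\partial\boldsymbol n|_{\partial\Omega})$ acting on $H^2(\Omega)$, and to build this right inverse by localization. Concretely, I would fix a finite open cover of $\overline\Omega$ consisting of (i) sets $U_0\Subset\Omega$ in the interior, (ii) sets $U_k$ meeting $\partial\Omega$ only along a smooth $\mathcal C^2$ arc away from the vertices, and (iii) sets $V_j$ centered at each corner $O_j$; then I would subordinate a partition of unity $\{\chi_\ell\}$ with $\sum_\ell\chi_\ell\equiv1$ near $\overline\Omega$. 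The lift is assembled as $S=\sum_\ell\chi_\ell S_\ell$, where each $S_\ell$ is a local lift and the interior pieces contribute nothing to the boundary traces.

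On each smooth-arc chart $U_k$ I would straighten the boundary by a $\mathcal C^2$ change of variables $(s,r)$, with $s$ the arclength and $r$ the signed distance to $\partial\Omega$, mapping $U_k\cap\Omega$ onto a piece of the half-plane $\{r>0\}$. In these coordinates the conditions become $S=\eta$ and $\partial_r S=0$ on $\{r=0\}$, and the existence of a local lift $S_k$ with $\|S_k\|_{H^2}\le c\,\|\eta\|_{H^{3/2}}$ is exactly the trace theorem with bounded right inverse for $H^2$ of a half-space, i.e. the cited result \cite[Theorem~1.5.1.3]{Gri85} (one may also write $S_k$ explicitly by partial Fourier transform in $s$). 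Pulling back through the $\mathcal C^2$ chart preserves $H^2$ and $H^{3/2}$ norms up to constants, so these pieces are under control.

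The delicate step, and the one I expect to be the main obstacle, is the corner charts $V_j$, where near $O_j$ the domain is a sector of aperture $\alpha_j\le\pi/2$ along which two smooth arcs meet. There a local $H^2$ lift with the prescribed traces exists only if the data $(\eta,0)$ satisfies the vertex compatibility conditions characterizing the trace of $H^2$ of a sector (the polygonal refinement of the trace theorem in \cite{Gri85}). Writing $\boldsymbol\tau_\pm,\boldsymbol n_\pm$ for the tangents and normals of the two incident edges, the trace of $\nabla S$ on each edge equals $(\partial_{\boldsymbol\tau_\pm}\eta)\boldsymbol\tau_\pm+0\cdot\boldsymbol n_\pm$, and the $H^1$-matching of these two vector traces at the vertex amounts to a weighted square-integrability of the form $\int_0^\delta t^{-1}\,|(\partial_{\boldsymbol\tau_+}\eta)(t)\,\boldsymbol\tau_+-(\partial_{\boldsymbol\tau_-}\eta)(t)\,\boldsymbol\tau_-|^2\,dt<\infty$. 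The continuity $\eta(O_j^+)=\eta(O_j^-)$ is automatic, since $H^{3/2}\hookrightarrow\mathcal C^0$ in one dimension; the vanishing of the Neumann datum trivializes the normal component of the compatibility; and it is here that the hypothesis $\eta\in H^{3/2}(\partial\Omega)$ must be read in the sense carrying the vertex conditions, so that the remaining tangential compatibility holds and the sector trace theorem produces a bounded local lift $S_j$.

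Finally I would reassemble $S=\sum_\ell\chi_\ell S_\ell$. Each cutoff is smooth and compactly supported in its chart, so multiplication by $\chi_\ell$ is bounded on $H^2$, and the finitely many local bounds add up to $\|S\|_{H^2(\Omega)}\le c_1\|\eta\|_{H^{3/2}(\partial\Omega)}$. On $\partial\Omega$ one has $S=\sum_\ell\chi_\ell\,\eta=\eta$ because $\sum_\ell\chi_\ell\equiv1$ there, while $\partial S/\partial\boldsymbol n=\sum_\ell(\partial\chi_\ell/\partial\boldsymbol n)\,\eta+\sum_\ell\chi_\ell\cdot0=\eta\,\partial(\sum_\ell\chi_\ell)/\partial\boldsymbol n=0$. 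This is exactly the asserted lift, and since no uniqueness is claimed the argument stops here.
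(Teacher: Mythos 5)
Your proposal is sound in substance, but be aware that the paper does not prove Lemma~\ref{lem2.3.1} at all: the lemma is quoted as a classical lifting result, the entire ``proof'' being the citation of \cite[Theorem~1.5.1.3]{Gri85}, which for a Lipschitz domain gives a bounded right inverse of the two-trace map $u\mapsto(u|_{\partial\Omega},\,\partial u/\partial\boldsymbol n|_{\partial\Omega})$ on $H^2(\Omega)$. What you wrote is essentially a reconstruction of the proof of that black box (localization, half-plane lifting on the $\mathcal C^2$ arcs, sector lifting at the vertices), while still leaning on the same source for the two local trace theorems; so the difference is that you re-derive what the paper invokes, which costs length but makes visible the one place where the statement has real content. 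That place is your corner discussion, and it can be made sharper: since $\boldsymbol\tau_+$ and $\boldsymbol\tau_-$ are linearly independent at a vertex of aperture $\alpha_j\in(0,\pi)$ (here $\alpha_j\le\pi/2$), your matching condition $\int_0^\delta t^{-1}\,\lvert(\partial_{\boldsymbol\tau_+}\eta)\boldsymbol\tau_+-(\partial_{\boldsymbol\tau_-}\eta)\boldsymbol\tau_-\rvert^2\,\mathrm{d}t<\infty$ is equivalent to requiring $\int_0^\delta t^{-1}\lvert\partial_{\boldsymbol\tau_\pm}\eta\rvert^2\,\mathrm{d}t<\infty$ separately on \emph{both} edges, i.e.\ the tangential derivative of $\eta$ must vanish at every vertex in the weighted $L^2$ sense. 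In particular the lemma is false if $H^{3/2}(\partial\Omega)$ is read naively as ``edge-wise $H^{3/2}$ plus continuity at vertices'': near a square corner at the origin take $\eta(x,0)=x$, $\eta(0,y)=y$; then any admissible $S$ would give $\nabla S$ edge traces $(1,0)$ and $(0,1)$, which violate the matching condition, even though this $\eta$ is the Dirichlet trace of the $H^2$ function $x+y$. So your remark that the hypothesis ``must be read in the sense carrying the vertex conditions'' is not a presentational choice but the only reading under which the statement is true, and it is exactly what Grisvard's formulation encodes by demanding that the full boundary gradient $\nabla_T g_0+g_1\boldsymbol n$ --- here $\nabla_T\eta$, since $g_1=0$ --- belong to $H^{1/2}(\partial\Omega)^2$ as a vector field. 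With that reading fixed, your assembly (partition of unity, $\mathcal C^2$ flattening, summation of local lifts, cancellation of the $(\partial\chi_\ell/\partial\boldsymbol n)\eta$ terms because $\sum_\ell\chi_\ell\equiv1$ near $\overline\Omega$) is routine and correct.
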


Following a traditional approach (see for example \cite{Tem97, Wang07}), we recast the 2D Euler-Boussinesq system in terms of the perturbative variable (perturbation away from the stationary state $(0, S)$); namely we set
\begin{equation*}
	(\boldsymbol u,\theta)=(\boldsymbol u, T-S).
\end{equation*}
In the perturbative variables, the 2D Euler-Boussinesq system \eqref{eq1.1.1}-\eqref{eq1.1.2} reads
\begin{equation}\begin{cases}\label{eq1.1.3}
	\partial_t\boldsymbol u  + \boldsymbol u\cdot \nabla \boldsymbol u + \nabla\pi = \theta \boldsymbol e_2 + S\boldsymbol e_2,\quad \boldsymbol e_2=(0,1),\\
	{\mathrm{div}\,}\boldsymbol u=0,\\
	\partial_t \theta - \Delta\theta + \boldsymbol u\cdot\nabla\theta +\boldsymbol u\cdot\nabla S =  \Delta S, \\
\end{cases}\end{equation}
with the initial and boundary conditions
\begin{equation}\begin{cases}\label{eq1.1.4}
	\boldsymbol u(0,x,y)=\boldsymbol u_0(x,y),\quad\quad \theta(0,x,y)=\theta_0(x,y):=T_0(x,y)-S(x,y),\\
	\boldsymbol u(t,x,y)\cdot \boldsymbol n=0,\quad\quad \theta(t,x,y)=0,\qquad (x,y)\in\partial\Omega.
\end{cases}\end{equation}
Note that we have set the diffusivity $\kappa=1$ in \eqref{eq1.1.3} for simplicity.

The rest of the article is organized as follows. At the end of this introduction, we introduce the notion of \emph{suitable weak solution}  for the 2D Euler-Boussinesq system \eqref{eq1.1.3}-\eqref{eq1.1.4} and state our main result. We prove the existence of the \emph{suitable weak solution}  by the vanishing viscosity method, which was used by Bardos in \cite{Bar72} to study the 2D Euler equations. In Section \ref{sec2}, we collect necessary tools for the analysis of the Boussinesq system in the polygonal-like domains. Section \ref{sec3} is devoted to prove the uniform estimates for the approximated solutions constructed by the vanishing viscosity method. We finally in Section \ref{sec4} to prove the main result Theorem \ref{thm2.1} below, that is the existence of the \emph{suitable weak solution} and also the regularity and uniqueness of the 2D Euler-Boussinesq system \eqref{eq1.1.3}-\eqref{eq1.1.4}. The proof of the uniqueness follows Yudovich's energy method and relies on the endpoint $L^\infty(\Omega)\to L^{\gamma_\mathrm{exp}}(\Omega)$ regularity result for the solution to the Dirichlet problem in the polygonal-like domains.
In Appendix \ref{sec-app-euler}, we recast the standard $L^p$-estimate for the vorticity of the 2D Euler equations.

\subsection{Definition of the \emph{suitable weak solution} and the main result}
In order to set up the framework of how to study the 2D Euler-Boussinesq system \eqref{eq1.1.3}-\eqref{eq1.1.4}, we recall the classical space
\[
V=\aiminset{\boldsymbol u\in H^1(\Omega)\,:\,{\mathrm{div}\,}\boldsymbol u=0,\quad \boldsymbol u\cdot \boldsymbol n=0\text{ on }\partial\Omega},
\]
and we say that a couple $(\boldsymbol u, \theta)$ satisfying
\begin{equation}\begin{split}
	\boldsymbol u\in L^\infty(0, t_1; V),\qquad \partial_t\boldsymbol u \in L^2(0, t_1; L^{3/2}(\Omega));\\
	\theta\in \mathcal C([0, t_1]; H^1_0(\Omega))\cap L^2(0,t_1; H^2(\Omega)),\qquad \partial_t\theta\in L^2(0, t_1; L^2(\Omega)),
\end{split}\end{equation}
is a \emph{suitable weak solution}  of the problem \eqref{eq1.1.3}-\eqref{eq1.1.4} if
\begin{equation*}\begin{split}
	-\int_0^{t_1}\aimininner{\boldsymbol u(t)}{\tilde{\boldsymbol u}}_{L^2}\psi'(t){\mathrm{d}t}
	+\int_0^{t_1}\aimininner{\boldsymbol u(t)\cdot\nabla \boldsymbol u(t)}{\tilde{\boldsymbol u}}_{L^2}\psi(t){\mathrm{d}t}\\
	=\aimininner{\boldsymbol u_0}{\tilde{\boldsymbol u}}\psi(0)+\int_0^{t_1}\aimininner{\theta\boldsymbol e_2 + S\boldsymbol e_2}{\tilde{\boldsymbol u}}_{L^2}\psi(t){\mathrm{d}t},
\end{split}\end{equation*}
for all $\tilde{\boldsymbol u}\in L^3_{\boldsymbol\tau}(\Omega)$ and $\psi\in \mathcal C^1([0, t_1])$ with $\psi(t_1)=0$, and
\begin{equation*}\begin{split}
	-\int_0^{t_1}\aimininner{\theta}{\tilde\theta}_{L^2}\varphi'(t){\mathrm{d}t} + \int_0^{t_1}\aimininner{\nabla\theta}{\nabla\tilde\theta}_{L^2}\varphi(t){\mathrm{d}t} + \int_0^{t_1}\aimininner{\boldsymbol u\cdot\nabla(\theta+S)}{\tilde\theta}_{L^2}\varphi(t){\mathrm{d}t}\\
	=\aimininner{\theta_0}{\tilde\theta}\varphi(0) + \int_0^{t_1}\aimininner{\Delta S}{\tilde\theta}_{L^2}\varphi(t){\mathrm{d}t},
\end{split}\end{equation*}
for all $\tilde\theta\in L^2(\Omega)$ and $\varphi\in \mathcal C^1([0, t_1])$ with $\varphi(t_1)=0$. For the meaning of the notation $L^p_{\boldsymbol\tau}(\Omega)$ ($1< p< \infty$), see Section \ref{sec2}.

The existence of a global weak solution when the boundary data $\eta$ and hence $S$ are constants is obtained using the fixed point theory in \cite{Zha10}. It seems that the fixed point argument could not be adapted to the case for the arbitrary boundary data. Here, we are going to utilize the vanishing viscosity method to prove the existence of a global \emph{suitable weak} solution and furthermore prove the global well-posedness of the 2D Euler-Boussinesq \eqref{eq1.1.3}-\eqref{eq1.1.4} with Yudovich's type data. We now state the main result of this article, with the proof presented in the Sections~\ref{sec3}-\ref{sec4}.
\begin{theorem}\label{thm2.1}
	Let $\Omega$ be a polygonal-like domain (piecewise $\mathcal C^2$-boundary) with maximum aperture $\alpha_j\leq \pi/2$ and let there be given that $S\in H^2(\Omega)$, $\boldsymbol u_0\in V$, $\theta_0\in H_0^1(\Omega)$, and $t_1>0$. Then there exists a global \emph{suitable weak} solution $(\boldsymbol u, \theta)\in \mathcal C([0, t_1]; L^2_{\boldsymbol\tau}(\Omega))\times \mathcal C([0, t_1]; H_0^1(\Omega))$ of the 2D Euler-Boussinesq system \eqref{eq1.1.3}-\eqref{eq1.1.4} such that the following estimates hold:
	\begin{equation}\begin{cases}\label{eq2.0.4}
		\aiminnorm{\boldsymbol u}_{L^\infty(0, t_1; V)}  + \aiminnorm{\theta}_{L^\infty(0, t_1; H^1_0(\Omega))}+ \aiminnorm{\theta}_{L^2(0, t_1; H^2(\Omega))} \leq \mathcal Q_2,\\
		\aiminnorm{\boldsymbol u_t}_{L^2(0, t_1; L^{3/2}(\Omega))} + \aiminnorm{\theta_t}_{L^2(0, t_1; L^2(\Omega))}\leq \mathcal Q_2,
	\end{cases}\end{equation}
	where $\mathcal Q_2$ is a positive function defined by
	\[
	\mathcal Q_2:=\mathcal Q_2(t_1, \aiminnorm{\boldsymbol u_0}_{H^1}, \aiminnorm{\theta_0}_{H^1}, \aiminnorm{S}_{H^2}),
	\]
	which is increasing in all its arguments.
	
	Furthermore, if we additionally assume $\omega_0={\mathrm{curl}\,}\boldsymbol u_0\in L^{\infty}(\Omega)$, $\theta_0\in H^2(\Omega)$, and $S\in H^3(\Omega)$, then there exists a unique solution $(\boldsymbol u, \theta)$ of the 2D Euler-Boussinesq system \eqref{eq1.1.3}-\eqref{eq1.1.4} satisfying
	\begin{equation*}\begin{split}
	\omega={\mathrm{curl}\,}\boldsymbol u\in L^\infty(0, t_1; L^{\infty}(\Omega)),\quad  \theta\in \mathcal C([0, t_1]; H^2(\Omega))\cap L^2(0, t_1; H^3(\Omega)),\\
	 \theta_t\in L^\infty(0,t_1; L^2(\Omega))\cap L^2(0, t_1; H^1(\Omega)),
	\end{split}\end{equation*}
	and the estimates
	\begin{equation}\begin{split}\label{eq2.0.6}
		 \aiminnorm{\theta}_{L^\infty(0, t_1; H^2(\Omega)) } + \aiminnorm{\theta}_{L^2(0, t_1; H^3(\Omega)) }
		 \leq \mathcal Q_3,\\
	\aiminnorm{\theta_t}_{L^\infty(0, t_1; L^2(\Omega))} + \aiminnorm{\theta_t}_{L^2(0, t_1; H^1(\Omega)) }
	\leq \mathcal Q_3,\\
	\aiminnorm{\omega}_{ L^\infty(\Omega\times(0, t_1))} \leq \mathcal Q_4.
	\end{split}\end{equation}
	where $\mathcal Q_3$ and $\mathcal Q_4$ are  positive functions defined by
	\[
	\mathcal Q_3:=\mathcal Q_3(t_1, \aiminnorm{\omega_0}_{L^{4}}, \aiminnorm{\theta_0}_{H^2}, \aiminnorm{S}_{H^3}),\qquad
	\mathcal Q_4:=\mathcal Q_4(t_1, \aiminnorm{\omega_0}_{L^{\infty}(\Omega)}, \aiminnorm{\theta_0}_{H^2}, \aiminnorm{S}_{H^3}),
	\]
	which are increasing in all their arguments.	
\end{theorem}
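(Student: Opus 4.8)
The plan is to obtain the \emph{suitable weak} solution by a vanishing-viscosity approximation in the spirit of Bardos \cite{Bar72}, to upgrade its regularity by a bootstrap once the stronger data hypotheses are imposed, and to prove uniqueness by Yudovich's energy method adapted to the polygonal geometry. First I would add a dissipative term $-\nu\Delta\boldsymbol u^\nu$ to the momentum equation in \eqref{eq1.1.3}, with a slip-type boundary condition compatible with $\boldsymbol u^\nu\cdot\boldsymbol n=0$ that keeps the vorticity under control (as in Bardos' treatment of 2D Euler), leaving the parabolic temperature equation unchanged. For each fixed $\nu>0$ this is a Navier--Stokes--Boussinesq system whose global strong solvability is classical, so the real work is a priori bounds \emph{uniform in} $\nu$. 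Testing the momentum equation with $\boldsymbol u^\nu$ and the temperature equation with $\theta^\nu$ and adding, the coupling terms $\aimininner{\theta^\nu\boldsymbol e_2}{\boldsymbol u^\nu}_{L^2}$ and $\aimininner{\boldsymbol u^\nu\cdot\nabla S}{\theta^\nu}_{L^2}$ are absorbed by Cauchy--Schwarz and the $H^2$-bound on $S$, giving $\boldsymbol u^\nu\in L^\infty(0,t_1;L^2)$ and $\theta^\nu\in L^\infty(0,t_1;L^2)\cap L^2(0,t_1;H^1)$. Testing the temperature equation with $-\Delta\theta^\nu$ and invoking the polygonal $H^2$-elliptic regularity valid for $\max_j\alpha_j\le\pi/2$ (see \eqref{eq1.5.5}) promotes this to the $L^\infty(H^1_0)\cap L^2(H^2)$ bound in \eqref{eq2.0.4}, and $\partial_t\theta^\nu\in L^2(L^2)$ then follows from the equation itself.

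Next I would pass to the vorticity formulation for the velocity: $\omega^\nu=\mathrm{curl}\,\boldsymbol u^\nu$ solves a transport--diffusion equation forced by $\partial_x(\theta^\nu+S)$, so the $L^p$-theory recalled in Appendix~\ref{sec-app-euler} bounds $\aiminnorm{\omega^\nu(t)}_{L^p}$ in terms of $\int_0^t\aiminnorm{\partial_x(\theta^\nu+S)}_{L^p}$; with the temperature bounds already in hand this yields a $\nu$-uniform bound for $\boldsymbol u^\nu$ in $L^\infty(0,t_1;V)$, and the momentum equation then gives $\partial_t\boldsymbol u^\nu\in L^2(L^{3/2})$. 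These bounds suffice for Aubin--Lions compactness: $\boldsymbol u^\nu\to\boldsymbol u$ strongly in $L^2(0,t_1;L^2)$ and $\theta^\nu\to\theta$ strongly in $L^2(0,t_1;H^1)$ along a subsequence. The only delicate passage is the quadratic term $\boldsymbol u^\nu\cdot\nabla\boldsymbol u^\nu$ tested against $\tilde{\boldsymbol u}\in L^3_{\boldsymbol\tau}$, which I would treat by combining the strong convergence of $\boldsymbol u^\nu$ with the weak convergence of $\nabla\boldsymbol u^\nu$ (and symmetrically for $\boldsymbol u^\nu\cdot\nabla\theta^\nu$). This produces a \emph{suitable weak} solution satisfying \eqref{eq2.0.4}, with the stated time-continuity recovered from the $\partial_t$-bounds by a Lions--Magenes interpolation argument. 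Under the stronger hypotheses $\omega_0\in L^\infty$, $\theta_0\in H^2$, $S\in H^3$, I would bootstrap: from $\omega_0\in L^4$ alone the vorticity bound gives $\boldsymbol u\in L^\infty(L^\infty)$ through the Biot--Savart map and the two-dimensional embedding $W^{1,4}\hookrightarrow L^\infty$, which is what is needed to control $\boldsymbol u\cdot\nabla(\theta+S)$; differentiating the temperature equation in time and testing suitably yields $\theta_t\in L^\infty(L^2)\cap L^2(H^1)$, and then reading the equation at fixed $t$ as the Dirichlet problem $-\Delta\theta=\Delta S-\theta_t-\boldsymbol u\cdot\nabla(\theta+S)$ and applying the polygonal $H^3$-elliptic regularity gives $\theta\in L^\infty(H^2)\cap L^2(H^3)$, i.e. the bounds \eqref{eq2.0.6} with constant $\mathcal Q_3$. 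The genuinely $L^\infty$-vorticity estimate $\aiminnorm{\omega}_{L^\infty(\Omega\times(0,t_1))}\le\mathcal Q_4$ requires $\omega_0\in L^\infty$ and follows from the transport estimate once $\partial_x(\theta+S)\in L^1(0,t_1;L^\infty)$ is secured from the improved temperature regularity.

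Uniqueness is the heart of the argument and the step I expect to be the main obstacle, precisely because the corner singularities degrade the regularity of the Biot--Savart map. For two solutions I would set $\bar{\boldsymbol u}=\boldsymbol u_1-\boldsymbol u_2$, $\bar\theta=\theta_1-\theta_2$ and estimate $E(t)=\tfrac12(\aiminnorm{\bar{\boldsymbol u}}_{L^2}^2+\aiminnorm{\bar\theta}_{L^2}^2)$. The parabolic temperature equation contributes a favorable dissipation $-\aiminnorm{\nabla\bar\theta}_{L^2}^2$, so the dangerous term is the velocity self-interaction $\aimininner{\bar{\boldsymbol u}\cdot\nabla\boldsymbol u_2}{\bar{\boldsymbol u}}_{L^2}$, which Yudovich's scheme bounds by $\aiminnorm{\bar{\boldsymbol u}}_{L^{2p'}}^2\aiminnorm{\nabla\boldsymbol u_2}_{L^p}$. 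The new ingredient for a polygonal domain is that the growth of $\aiminnorm{\nabla\boldsymbol u_2}_{L^p}$ with $p$ is dictated not by the smooth rate but by the endpoint $L^\infty\to L^{\gamma_{\exp}}$ mapping property of the Dirichlet solution operator; interpolating $\aiminnorm{\bar{\boldsymbol u}}_{L^{2p'}}$ between $L^2$ and the uniform higher-integrability of the velocities and optimizing in $p$, I expect an Osgood inequality $E'(t)\le C\,\Phi(E(t))$ with $\int_0^1 \mathrm{d}s/\Phi(s)=+\infty$. Since $E(0)=0$, Osgood's lemma then forces $E\equiv0$. The crux---and the reason the hypothesis $\max_j\alpha_j\le\pi/2$ enters so decisively---is to verify that the degraded exponent $\gamma_{\exp}$ still produces a divergent Osgood integral, so that Yudovich's scheme closes despite the reduced corner regularity.
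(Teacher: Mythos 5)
Your proposal follows essentially the same route as the paper: vanishing viscosity with the free-slip boundary condition $\boldsymbol u_\nu\cdot\boldsymbol n=0$, $\partial(\boldsymbol u_\nu\cdot\boldsymbol\tau)/\partial\boldsymbol n=0$ (which forces $\omega_\nu=0$ on $\partial\Omega$) for existence; a bootstrap ($\omega_0\in L^4\Rightarrow\boldsymbol u\in L^\infty(W^{1,4})\hookrightarrow L^\infty(L^\infty)$, then the $\theta_t$ energy estimate, then elliptic regularity for $\theta$, then the $p\to\infty$ vorticity bound) for the regularity part; and Yudovich's energy method powered by the endpoint $L^\infty\to L^{\gamma_{\mathrm{exp}}}$ elliptic estimate \eqref{eq1.5.5} for uniqueness. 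The differences in the uniqueness step are cosmetic: you optimize in $p$ and invoke Osgood's lemma, while the paper runs the $\epsilon$-parametrized inequality \eqref{eq1.5.3} and lets $\epsilon\to0$ after integrating the resulting differential inequality; also you measure the temperature difference in $L^2$ whereas the paper tests with $-\Delta\bar\theta$ and tracks $\aiminnorm{\nabla\bar\theta}^2$. Both variants close, given the $L^\infty(H^2)\cap L^2(H^3)$ control of $\theta_1$ and the $L^\infty$ bound on the velocities.

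One step fails as you ordered it, though the repair is inside your own toolkit. In the uniform-in-$\nu$ estimates you claim the $L^\infty(H^1_0)\cap L^2(H^2)$ bound for $\theta_\nu$ (testing with $-\Delta\theta_\nu$) \emph{before} establishing any $\nu$-uniform $H^1$ bound on $\boldsymbol u_\nu$. The convection term obeys $\aiminabs{\aimininner{\boldsymbol u_\nu\cdot\nabla\theta_\nu}{\Delta\theta_\nu}}\leq \aiminnorm{\boldsymbol u_\nu}_{L^4}\aiminnorm{\nabla\theta_\nu}_{L^4}\aiminnorm{\Delta\theta_\nu}$, and $\aiminnorm{\boldsymbol u_\nu}_{L^4}$ requires $\aiminnorm{\boldsymbol u_\nu}_{H^1}$ via Ladyzhenskaya; at that stage the only gradient information is the viscous dissipation $\nu\int_0^{t_1}\aiminnorm{\nabla\boldsymbol u_\nu}^2{\mathrm{d}t}\leq C$, which degenerates as $\nu\to0$, so the Gronwall argument does not close uniformly in $\nu$. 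The fix is to note that the $L^2$ vorticity estimate needs only $\int_0^{t_1}\aiminnorm{\nabla\theta_\nu}^2{\mathrm{d}t}$ from the basic energy bound, so it must be run \emph{before} (or, as the paper does, simultaneously with) the $-\Delta\theta_\nu$ estimate, the two being combined in a single Gronwall inequality; with $\boldsymbol u_\nu\in L^\infty(0,t_1;V)$ in hand, your $H^2$-estimate for $\theta_\nu$ then closes. A last small point: the interior $H^2$ regularity you invoke there is \eqref{eq1.3.2}, not \eqref{eq1.5.5}, which is the Orlicz endpoint used only in the uniqueness proof.
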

\begin{remark}
We first note that the regularity of $\theta$ in Theorem~\ref{thm2.1} only depends on the $L^4$-norm of the initial vorticity $\omega_0$ and hence as in \cite{Yud95}, the estimate \eqref{eq1.3.2} can be used to show the uniqueness part of Theorem~\ref{thm2.1} under weaker assumption than $\omega_0\in L^\infty(\Omega)$, including unbounded initial vorticity with controlled growth rate of the $L^p$-norm of $\omega_0$ as $p\rightarrow \infty$. For instance, one can take
\[
\omega_0\in \bigcap_{1<p<\infty}L^p(\Omega),\qquad\text{ with } \qquad \sup_{p>e^e} \frac{ \aiminnorm{\omega_0}_{L^p} }{ \log\log p}  < \infty
\]
and see \cite[Section~5]{Yud95} for a precise definition of the class of allowed data.

\end{remark}

\section{Notations and preliminaries}\label{sec2}
Here and throughout this article, we will not distinguish the notations for vector and scalar function spaces whenever they are self-evident from the context. For $s\in\mathbb R$ and $1\leq p\leq\infty$, we denote by $W^{s,p}(\Omega)$ (resp. $H^s(\Omega)$) the classical Sobolev space of order $s$ on $\Omega$ with norm $\aiminnorm{\cdot}_{W^{s,p}}$ (resp. $\aiminnorm{\cdot}_{H^s}$), by $W^{s,p}_0(\Omega)$ (resp. $H_0^s(\Omega)$) the closure of $\mathcal D(\Omega)$ in the space $W^{s,p}(\Omega)$ (resp. $H^s(\Omega)$) when $s>0$,  and by $L^p(\Omega)$  the classical $L^p$-Lebesgue space with norm $\|\cdot\|_{L^p}$. For simplicity, we reserve the notation $\aiminnorm{\cdot}$for the $L^2$-norm.

In this article, we denote by $\mathcal Q_i(\cdot)$ ($i=1,2,\cdots$) the positive increasing functions in all their arguments, which may vary from line to line. The symbol $C$ denotes a generic positive constant, which may depend on the domain $\Omega$, but independent of the data $\boldsymbol u_0$, $\theta_0$, $S$ and the time $t_1$.

\subsection{$L^p$-tangential vector fields and Helmholtz decomposition}
We also introduce the $L^p$-tangential vector fields space as in \cite[Section~2.2.1]{BDT13}:
\[
L^p_{\boldsymbol\tau}(\Omega)=\aiminset{\boldsymbol u\in L^p(\Omega)\,:\,{\mathrm{div}\,}\boldsymbol u=0,\quad \boldsymbol u\cdot \boldsymbol n=0\text{ on }\partial\Omega},\qquad 1<p<\infty,
\]
and the smooth function space
\[
\mathcal V=\aiminset{\boldsymbol u\in\mathcal D(\Omega)\,:\, {\mathrm{div}\,}\boldsymbol u=0}.
\]
Let us denote by $\mathrm{P}_\Omega\,:\,\mathcal D(\Omega)\rightarrow \mathcal V$ the corresponding projection operator.
It is well known that $\mathrm{P}_\Omega$ extends to be a linear orthogonal operator on $L^2(\Omega)$ for general Lipschitz domains (see for example \cite[Theorem~I.1.4]{Tem01}). Recently, $\mathrm{P}_\Omega$ extends to be a bounded linear operator on $L^p(\Omega)$ for bounded convex domains (see \cite[Theorem~1.3]{GS10}) with $1<p<\infty$ and for general Lipschitz domains with the range $p\in(3/2-\epsilon, 3+\epsilon)$ (see \cite{FMM98}). Here, we would like to extend these results to the polygonal-like domains.
\begin{proposition}\label{prop1.1.1}
Let $1<p<\infty$ and $\Omega \subset \mathbb R^2$ be a polygonal-like domain. Then there holds
\begin{enumerate}[$\;(i)$]
	\item the space $\mathcal V$ is dense in $L^p_{\boldsymbol\tau}(\Omega)$;
	\item the operator $\mathrm{P}_\Omega$ is extended to be a bounded linear projection operator from $L^p(\Omega)$ to  $L_{\boldsymbol\tau}^p(\Omega)$ with the operator norm  only depending on $p$ and the domain $\Omega$;
	\item for each $\boldsymbol v \in L^p(\Omega)$, there exists $\pi$ belonging to the space $W^{1,p}(\Omega)$, unique up to an additive constant such that
		\begin{equation}\label{eq1.2.2}
		\mathrm{P}_\Omega^\perp \boldsymbol v:=(\mathrm{1}-\mathrm{P}_\Omega) \boldsymbol v= \nabla \pi.
		\end{equation}
		and with the estimate
		\[
		\max\big\{ \aiminnorm{ \mathrm{P}_\Omega \boldsymbol v}_{L^p},\; \aiminnorm{\nabla\pi}_{L^p} \big\} \leq C_{p, \Omega}\aiminnorm{\boldsymbol v}_{L^p},
		\]
		where $C_{p, \Omega}>0$ depends only on $p$ and the domain $\Omega$.
\end{enumerate}

\end{proposition}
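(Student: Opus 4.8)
The three assertions all reduce to the $L^p$-solvability of a single auxiliary Neumann problem, so the plan is to isolate that problem, settle it with the elliptic regularity theory for polygonal-like domains, and then read off (i)--(iii) by soft arguments. The heart of the matter is part (iii): given $\boldsymbol v\in L^p(\Omega)$, I would seek $\pi\in W^{1,p}(\Omega)$, unique up to an additive constant, solving the weak Neumann problem
\[
\int_\Omega \nabla\pi\cdot\nabla\phi\,\mathrm{d}x=\int_\Omega \boldsymbol v\cdot\nabla\phi\,\mathrm{d}x\qquad\text{for all }\phi\in W^{1,p'}(\Omega),
\]
with $p'$ the conjugate exponent; formally this is $\Delta\pi={\mathrm{div}\,}\boldsymbol v$ in $\Omega$ and $\partial_{\boldsymbol n}\pi=\boldsymbol v\cdot\boldsymbol n$ on $\partial\Omega$. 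For $p=2$ existence, uniqueness modulo constants, and the bound $\aiminnorm{\nabla\pi}_{L^2}\leq\aiminnorm{\boldsymbol v}_{L^2}$ are immediate from Lax--Milgram on $H^1(\Omega)/\mathbb R$ (this is the classical orthogonal Helmholtz decomposition). For general $1<p<\infty$ the estimate $\aiminnorm{\nabla\pi}_{L^p}\leq C_{p,\Omega}\aiminnorm{\boldsymbol v}_{L^p}$ is exactly the content of the elliptic regularity results quoted from \cite{BDT13, DT13}, which control the corner behavior of the Neumann Laplacian and thereby extend the solution operator $\boldsymbol v\mapsto\nabla\pi$ boundedly to all of $L^p(\Omega)$.

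Granting this, I would define $\mathrm{P}_\Omega\boldsymbol v:=\boldsymbol v-\nabla\pi$ and obtain (ii) and (iii) together: the identity $\mathrm{P}_\Omega^\perp\boldsymbol v=\nabla\pi$ is the decomposition \eqref{eq1.2.2}, and the stated bound on $\max\{\aiminnorm{\mathrm{P}_\Omega\boldsymbol v}_{L^p},\,\aiminnorm{\nabla\pi}_{L^p}\}$ follows from the Neumann estimate and the triangle inequality. To check $\mathrm{P}_\Omega\boldsymbol v\in L^p_{\boldsymbol\tau}(\Omega)$ I would test the weak equation, which reads $\int_\Omega \mathrm{P}_\Omega\boldsymbol v\cdot\nabla\phi\,\mathrm{d}x=0$: taking $\phi\in\mathcal D(\Omega)$ gives ${\mathrm{div}\,}(\mathrm{P}_\Omega\boldsymbol v)=0$ as a distribution, and then taking general $\phi\in W^{1,p'}(\Omega)$ shows, via the Green formula for solenoidal $L^p$-fields, that the normal trace $(\mathrm{P}_\Omega\boldsymbol v)\cdot\boldsymbol n$ vanishes. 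That $\mathrm{P}_\Omega$ is a projection with range exactly $L^p_{\boldsymbol\tau}(\Omega)$ follows because if $\boldsymbol v\in L^p_{\boldsymbol\tau}(\Omega)$ then the right-hand side $\int_\Omega\boldsymbol v\cdot\nabla\phi\,\mathrm{d}x$ vanishes for every $\phi$ (integrate by parts using ${\mathrm{div}\,}\boldsymbol v=0$ and $\boldsymbol v\cdot\boldsymbol n=0$), so $\pi$ is constant, $\nabla\pi=0$, and $\mathrm{P}_\Omega\boldsymbol v=\boldsymbol v$; hence $\mathrm{P}_\Omega^2=\mathrm{P}_\Omega$.

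For the density statement (i), I would argue by duality. Since $\mathcal V\subset L^p_{\boldsymbol\tau}(\Omega)$, by Hahn--Banach it suffices to show that every $\boldsymbol g\in L^{p'}(\Omega)$ with $\int_\Omega\boldsymbol g\cdot\boldsymbol\phi\,\mathrm{d}x=0$ for all $\boldsymbol\phi\in\mathcal V$ in fact annihilates all of $L^p_{\boldsymbol\tau}(\Omega)$. By the de Rham--Ne\v{c}as characterization of gradients on a bounded Lipschitz domain, vanishing against every divergence-free test field forces $\boldsymbol g=\nabla q$ with $q\in L^{p'}(\Omega)$; since then $\nabla q=\boldsymbol g\in L^{p'}(\Omega)$, we have $q\in W^{1,p'}(\Omega)$. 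A gradient annihilates each $\boldsymbol u\in L^p_{\boldsymbol\tau}(\Omega)$ because $\int_\Omega\nabla q\cdot\boldsymbol u\,\mathrm{d}x=-\int_\Omega q\,{\mathrm{div}\,}\boldsymbol u\,\mathrm{d}x+\langle q,\boldsymbol u\cdot\boldsymbol n\rangle_{\partial\Omega}=0$, using ${\mathrm{div}\,}\boldsymbol u=0$ and $\boldsymbol u\cdot\boldsymbol n=0$. Thus the annihilator of $\mathcal V$ is contained in that of $L^p_{\boldsymbol\tau}(\Omega)$, which yields the density.

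The main obstacle is the uniform $L^p$-bound for the Neumann solution operator on the corner domain. Away from the vertices $\{O_j\}$ this is classical interior and boundary elliptic regularity, but near each $O_j$ the solution splits into a regular part plus explicit singular terms of the form $r^{k\pi/\alpha_j}$ adapted to the cone of aperture $\alpha_j$, whose gradients one must show lie in $L^p(\Omega)$ with controlled norm. This is precisely where the aperture geometry enters and where the regularity results of \cite{BDT13, DT13} do the decisive work; every remaining step is functional-analytic bookkeeping.
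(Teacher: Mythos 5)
Your overall architecture matches the paper's: everything hinges on $L^p$-solvability of a weak Neumann problem, $\mathrm{P}_\Omega$ is defined as $\boldsymbol v \mapsto \boldsymbol v - \nabla\pi$, and items (i)--(ii) are soft consequences. The gap is in the one step that carries all the analytic content. You assert that the bound $\|\nabla\pi\|_{L^p} \leq C_{p,\Omega}\|\boldsymbol v\|_{L^p}$ ``is exactly the content of the elliptic regularity results quoted from \cite{BDT13, DT13}.'' It is not. Those results --- recorded in the paper as \eqref{eq1.3.2}, \eqref{eq1.3.3} and \eqref{eq1.5.5} --- are shift theorems for the \emph{Dirichlet} problem $-\Delta F = f$, $F|_{\partial\Omega}=0$, with source $f$ in $L^p$ (or $L^\infty$), producing two derivatives of $F$. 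Your problem is a \emph{Neumann} problem whose data is only $\mathrm{div}\,\boldsymbol v \in W^{-1,p}(\Omega)$ together with conormal data $\boldsymbol v\cdot\boldsymbol n$, and what is needed is a first-order bound on the solution operator $\boldsymbol v \mapsto \nabla\pi$, i.e. boundedness of the Helmholtz projection itself. No Dirichlet shift theorem yields this by bookkeeping: the estimate is boundary-sensitive in an essential way, as shown by the fact that for general Lipschitz domains the Helmholtz decomposition fails outside the range $p\in(3/2-\epsilon, 3+\epsilon)$ (see \cite{FMM98}, quoted in the paper), even though interior Calder\'on--Zygmund theory is always available. So the decisive estimate requires specifically Neumann-type machinery, which the references you invoke do not contain.

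The paper fills exactly this hole by a different route: by \cite[Theorem~6.3]{GS10}, the decomposition for $p \geq 2$ reduces to unique solvability of the harmonic Neumann problem \eqref{eq1.2.3} with boundary data in the Besov space $B^{p}_{-1/p}(\partial\Omega)$; that solvability is \cite[Theorem 1.2]{KS08} for bounded convex domains; and the paper's observation is that the only place convexity enters the Kim--Shen proof (their Lemmas~4.1--4.2, Rellich-type boundary estimates) can be replaced, on polygonal-like domains, by Grisvard's identity \cite[Theorem~3.1.1.2]{Gri85}. The range $1<p<2$ then follows by duality \cite[Lemma~6.2]{GS10}. Your closing paragraph gestures at a corner expansion with singular exponents $r^{k\pi/\alpha_j}$, but this is never carried out, and it is in any case the expansion adapted to the Dirichlet problem. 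Your treatment of (i) by de Rham/Hahn--Banach and of (ii) as a corollary of (iii) is sound (the paper instead cites \cite[Lemma~6.1]{GS10} for (i) and deduces (ii) from (i) and (iii)), but as written the proposal neither proves nor correctly locates in the literature the key estimate, so the argument for item (iii) is incomplete.
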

We remark that we actually only need item $(ii)$ in the case when $p=3/2$ and $p=4$.
\begin{proof}[Proof of Proposition \ref{prop1.1.1}]
	Item $(i)$ is proved in \cite[Lemma~6.1]{GS10} and item $(ii)$ is a direct consequence of items $(i)$ and $(iii)$. We only need to show item $(iii)$ and the arguments are as follows. 	
	We first deduce from \cite[Theorem~6.3]{GS10} that one only needs to show that the following Neumann problem is uniquely solvable
	\begin{equation}\label{eq1.2.3}
		\Delta\Psi = 0,\text{ in }\Omega,\qquad \frac{\partial \Psi}{\partial\boldsymbol n}=g\in B_{-1/p}^{p}(\partial\Omega),\text{ on }\partial\Omega,\qquad \Psi\in W^{1, p}(\Omega),
	\end{equation}
	where $B_{-1/p}^{p}(\partial\Omega)$ is the dual of the Besov space $B_{1/p}^q(\Omega)$ with $q=p/(p-1)$.
	The Neumann problem \eqref{eq1.2.3} is solvable for $p\geq 2$ if the results in \cite[Theorem 1.2]{KS08} (see also \cite[Theorem 1.1]{GS10}) for bounded convex domains is also true for the polygonal-like domains  (see \cite[pp. 2161]{GS10}).
	Tracking the proof of \cite[Theorem 1.2]{KS08}, we see that the convexity of the domain is only used in \cite[Lemmas~4.1-4.2]{KS08}, which may fail for general Lipschitz domains. But for the polygonal-like domains, we infer from \cite[Theorem~3.1.1.2]{Gri85} that \cite[Lemma~4.1]{KS08} still holds and so does \cite[Lemma~4.2]{KS08}, which together show that \cite[Theorem 1.2]{KS08} is still valid for the polygonal-like domains. Therefore, we can conclude that the Helmholtz decomposition \eqref{eq1.2.2} in $L^p(\Omega)$ is valid for $p\geq 2$ and hence for all $1<p<\infty$ by duality (see \cite[Lemma~6.2]{GS10}).
\end{proof}

\subsection{The Dirichlet problem and the Biot-Savart law}
Let $F=\mathrm{G}_\Omega f$ be the solution of the Dirichlet problem
\begin{equation}\label{eq1.3.1}
-\Delta F = f,\qquad F|_{\partial\Omega}=0.	
\end{equation}
The Lax-Milgram lemma tells us that if $f\in H^{-1}(\Omega)$, then there exists a unique $F\in H_0^1(\Omega)$ denoted by $\mathrm{G}_\Omega f$ satisfying \eqref{eq1.3.1} in the distributional sense.
If we further assume $f\in L^p(\Omega)$ with $p\geq 2$, then the  elliptic regularity result in \cite{Gri85} and improved in \cite[Theorem~1]{DT13} for the polygonal-like domains with maximum aperture $\max \alpha_j\leq \pi/2$ guarantees that $\mathrm{G}_\Omega f$ still has two derivatives in $L^p(\Omega)$ and the following estimate holds:
\begin{equation}\label{eq1.3.2}
	\mathrm{G}_\Omega f\in W^{2,p}(\Omega)\cap W_0^{1,p}(\Omega),\qquad
	\aiminnorm{\mathrm{G}_\Omega f}_{ W^{2,p} }\leq C_{\Omega} p \aiminnorm{f}_{L^p},\quad 2\leq p<\infty,
\end{equation}
where $C_{\Omega}$ only depends on $\Omega$. We also infer from \cite[Theorem~5.1.1.4]{Gri85} that
\begin{equation}\label{eq1.3.3}
	\aiminnorm{\mathrm{G}_\Omega f}_{H^3} \leq C_{\Omega}\aiminnorm{f}_{H^1},\qquad \forall f\in H_0^1(\Omega).
\end{equation}

We now set
\[
\boldsymbol{\mathrm{K}}_\Omega:=\nabla^{\perp}(\mathrm{G}_\Omega f),\qquad f\in H^{-1}(\Omega),
\]
where $\nabla^{\perp}=(\partial_y, -\partial_x)$. Then since  $\Omega$ is a polygonal-like domain with maximum aperture $\max \alpha_j\leq \pi/2$, there holds for all $2\leq p<\infty$,
\begin{equation}\label{eq1.3.4}
\begin{cases}
	\boldsymbol{\mathrm{K}}_\Omega \in \mathscr L( H^{-1}(\Omega),\; L^2_{\boldsymbol\tau}(\Omega)),  \\
	\boldsymbol{\mathrm{K}}_\Omega \in \mathscr L( L^p(\Omega),\; W^{1,p}(\Omega)\cap L^2_{\boldsymbol\tau}(\Omega)),\\
	\boldsymbol{\mathrm{K}}_\Omega \in \mathscr L( H_0^1(\Omega),\; H^2(\Omega)\cap L^2_{\boldsymbol\tau}(\Omega)).
\end{cases}
\end{equation}
To prove \eqref{eq1.3.4}, due to the regularity estimates \eqref{eq1.3.2}-\eqref{eq1.3.3}, we only need to verify that
\[
{\mathrm{div}\,} \boldsymbol{\mathrm{K}}_\Omega f = 0,\qquad \boldsymbol{\mathrm{K}}_\Omega f\cdot\boldsymbol n=0,\qquad \forall f\in H^{-1}(\Omega),
\]
which follows from the fact that $\mathcal V$ is dense in $L^2_{\boldsymbol \tau}(\Omega)$ and the following identity
\begin{equation*}\begin{split}
\aimininner{\boldsymbol{\mathrm{K}}_\Omega f}{\nabla \varphi}_\Omega &= \langle\nabla ( \mathrm{G}_\Omega f), \nabla^\perp \varphi\rangle_\Omega \\
&= -\aimininner{ \mathrm{G}_\Omega f}{ {\mathrm{div}\,} \nabla^\perp \varphi}_\Omega + \int_{\partial\Omega} (\mathrm{G}_\Omega f )\nabla^\perp \varphi \cdot \mathbf{n}=0, \quad \forall\varphi \in \mathcal D(\overline{\Omega}).	
\end{split}\end{equation*}

\subsection{Elliptic regularity at $p\rightarrow\infty$}
In order to extend the elliptic regularity \eqref{eq1.3.2} to the end point when $p\rightarrow\infty$, one needs to work with the Orlicz spaces, where the elliptic regularity result in these spaces is recently proved in \cite{DT13} for the polygonal-like domains.

\subsubsection{The Orlicz spaces.}
Here, we briefly recall some preliminaries on the Orlicz spaces and see \cite{RR91, Wil08} for more details.
A function $\gamma\,:\,[0, \infty]\mapsto [0, \infty]$ is said to be a Young function if
\begin{enumerate}[$\;(1)$]
	\item $\gamma$ is increasing and $\gamma(0)=0$, $\lim_{s\rightarrow \infty}\gamma(s)=\infty$;
	\item $\gamma$ is convex lower-semicontinuous $[0, \infty]$-valued function on $\mathbb R$;
	\item $\gamma$ is non-trivial, that is there exists a number $0<s_0<\infty$ such that $0<\gamma(s_0)<\infty$.
\end{enumerate}
The convex conjugate $\gamma^*$ of a $\gamma$ is defined by:
\[
\gamma^*(t):=\sup\{ st - \gamma(s),\qquad s\geq 0\},
\]
and one can show that $\gamma$ is a Young function if and only if $\gamma^*$ is a Young function.
The convex conjugacy allows us to obtain the Orlicz space version of H\"older's inequality.
The typical examples of Young functions are
\[
	\gamma_p(s) = s^p,\quad p> 1,\qquad\qquad
	\gamma_{\mathrm{exp}}(s) = e^s - 1.
\]
and their corresponding convex conjugates:
\[
\gamma_p(t) = t^{p'},\quad p'=\frac{p}{p-1},\qquad\qquad
	\gamma_{\mathrm{exp}}^*(t) = \begin{cases}
		t\ln t - t + 1, &\forall t\geq 1,\\
		0, &0\leq t\leq 1.
	\end{cases}.
\]
We now define the Orlicz spaces $L^\gamma(\Omega)$ to the the set of all measurable functions such that the Luxemburg norm is finite, that is
\[
\aiminnorm{f}_{L^{\gamma}}:=\inf\{ \lambda>0\,:\, \int_\Omega \gamma(\aiminabs{f}/\lambda ){\mathrm{d}x}{\mathrm{d}y} \leq 1 \} <\infty.
\]
One can easily verify $L^{\gamma_p}(\Omega)=L^p(\Omega)$ for all $p> 1$ and we also have the following result.
\begin{proposition}\label{prop1.1.2}
	Let $\gamma=\gamma(s)$ be a Young function. Then the space $L^\gamma(\Omega)$ with the norm $\aiminnorm{\cdot}_{\gamma}$ is a Banach space and for all $f\in L^{\gamma}(\Omega)$ and $f\in L^{\gamma^*}(\Omega)$, there holds
	\[
	\int_\Omega \aiminabs{f}\aiminabs{g}{\mathrm{d}x}{\mathrm{d}y}\leq 2\aiminnorm{f}_{L^{\gamma}}\aiminnorm{g}_{L^{\gamma^*}}.
	\]
\end{proposition}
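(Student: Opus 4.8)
The plan is to verify in turn that $\aiminnorm{\cdot}_{L^\gamma}$ is a norm, that $L^\gamma(\Omega)$ is complete under it, and finally to deduce the H\"older-type inequality from Young's inequality for the convex conjugate.

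First I would check the norm axioms. Homogeneity $\aiminnorm{cf}_{L^\gamma}=\aiminabs{c}\,\aiminnorm{f}_{L^\gamma}$ is immediate from the definition after the substitution $\lambda\mapsto\aiminabs{c}\lambda$. For definiteness, if $\aiminnorm{f}_{L^\gamma}=0$ then there exist $\lambda_k\to0^+$ with $\int_\Omega\gamma(\aiminabs{f}/\lambda_k){\mathrm{d}x}{\mathrm{d}y}\le1$, and since $\gamma(s)\to\infty$ as $s\to\infty$, Fatou's lemma forces $f=0$ almost everywhere. The triangle inequality is where convexity enters. Setting $a=\aiminnorm{f}_{L^\gamma}$ and $b=\aiminnorm{g}_{L^\gamma}$, both assumed positive since the degenerate cases are trivial, I would first record the key fact that $\int_\Omega\gamma(\aiminabs{f}/a){\mathrm{d}x}{\mathrm{d}y}\le1$, which follows by choosing admissible $\lambda_k\downarrow a$ and applying the monotone convergence theorem together with the lower-semicontinuity of $\gamma$. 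Then convexity of $\gamma$ gives
\begin{equation*}
\int_\Omega\gamma\Big(\frac{\aiminabs{f+g}}{a+b}\Big){\mathrm{d}x}{\mathrm{d}y}
\le\frac{a}{a+b}\int_\Omega\gamma\Big(\frac{\aiminabs{f}}{a}\Big){\mathrm{d}x}{\mathrm{d}y}
+\frac{b}{a+b}\int_\Omega\gamma\Big(\frac{\aiminabs{g}}{b}\Big){\mathrm{d}x}{\mathrm{d}y}\le1,
\end{equation*}
and by the definition of the Luxemburg norm this yields $\aiminnorm{f+g}_{L^\gamma}\le a+b$.

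For the H\"older-type bound, the essential ingredient is Young's inequality $st\le\gamma(s)+\gamma^*(t)$ for all $s,t\ge0$, which is precisely the defining inequality of the convex conjugate. After the harmless normalizations $\tilde f=f/\aiminnorm{f}_{L^\gamma}$ and $\tilde g=g/\aiminnorm{g}_{L^{\gamma^*}}$, for which $\int_\Omega\gamma(\aiminabs{\tilde f}){\mathrm{d}x}{\mathrm{d}y}\le1$ and $\int_\Omega\gamma^*(\aiminabs{\tilde g}){\mathrm{d}x}{\mathrm{d}y}\le1$ by the key fact above, integrating Young's inequality produces
\begin{equation*}
\int_\Omega\aiminabs{\tilde f}\,\aiminabs{\tilde g}\,{\mathrm{d}x}{\mathrm{d}y}
\le\int_\Omega\gamma(\aiminabs{\tilde f}){\mathrm{d}x}{\mathrm{d}y}+\int_\Omega\gamma^*(\aiminabs{\tilde g}){\mathrm{d}x}{\mathrm{d}y}\le2,
\end{equation*}
and rescaling back by $\aiminnorm{f}_{L^\gamma}\aiminnorm{g}_{L^{\gamma^*}}$ gives the claimed constant $2$, the degenerate cases where one norm vanishes being trivial.

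The main obstacle is completeness. Here I would first observe that on the bounded domain $\Omega$ the convexity and non-triviality of $\gamma$ yield a continuous embedding $L^\gamma(\Omega)\hookrightarrow L^1(\Omega)$: since $\gamma$ is convex with $\gamma(0)=0$, the ratio $\gamma(s)/s$ is nondecreasing, so $\gamma$ dominates a linear function beyond some threshold $s_0$, and splitting $\Omega$ according to whether $\aiminabs{f}\le s_0$ bounds $\aiminnorm{f}_{L^1}$ by a constant multiple of $\aiminnorm{f}_{L^\gamma}$. Consequently any Cauchy sequence $\{f_n\}$ in $L^\gamma(\Omega)$ is Cauchy in $L^1(\Omega)$, hence converges in $L^1$ to some $f$ along a subsequence converging almost everywhere. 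To finish, fix $\varepsilon>0$ and $N$ with $\aiminnorm{f_n-f_m}_{L^\gamma}\le\varepsilon$ for $n,m\ge N$, so that $\int_\Omega\gamma(\aiminabs{f_n-f_m}/\varepsilon){\mathrm{d}x}{\mathrm{d}y}\le1$; passing to the limit $m\to\infty$ along the almost-everywhere convergent subsequence and invoking Fatou's lemma together with the lower-semicontinuity of $\gamma$ yields $\int_\Omega\gamma(\aiminabs{f_n-f}/\varepsilon){\mathrm{d}x}{\mathrm{d}y}\le1$ for every $n\ge N$. This simultaneously shows $f_n-f\in L^\gamma(\Omega)$, hence $f\in L^\gamma(\Omega)$, and $\aiminnorm{f_n-f}_{L^\gamma}\le\varepsilon$ for $n\ge N$, which is exactly the completeness of $L^\gamma(\Omega)$.
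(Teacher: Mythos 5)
Your proof is correct and complete. Note that the paper itself gives no proof of this proposition at all: it simply refers the reader to the standard references on Orlicz spaces (Rao--Ren and Wilson), so there is no in-paper argument to compare against; what you have written is the classical textbook proof, supplied in full. Two points in your write-up deserve emphasis because they are exactly where care is needed in this paper's setting. First, the paper allows Young functions that are merely lower-semicontinuous and $[0,\infty]$-valued (indeed $\gamma_{\mathrm{exp}}^*$ vanishes on $[0,1]$, and a general $\gamma$ may jump to $+\infty$), and your systematic pairing of monotone convergence or Fatou with lower semicontinuity --- in the key fact $\int_\Omega\gamma(|f|/\|f\|_{L^\gamma})\,\mathrm{d}x\,\mathrm{d}y\le1$, in the definiteness argument, and in the limit passage for completeness --- is precisely what makes the argument valid in that generality, where naive continuity arguments would fail. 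Second, your completeness proof uses the embedding $L^\gamma(\Omega)\hookrightarrow L^1(\Omega)$, which relies on $|\Omega|<\infty$; this is legitimate here since $\Omega$ is a bounded polygonal-like domain, but it is worth flagging that this step is where the boundedness of the domain is quietly used (the general theory on unbounded domains replaces it by a local-integrability argument). One cosmetic remark: in the displayed triangle-inequality estimate you pass from $\gamma(|f+g|/(a+b))$ directly to the convex combination; this uses the monotonicity of $\gamma$ (via $|f+g|\le|f|+|g|$) in addition to convexity, which you should state, though it is not a gap.
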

In this article, we are interested in the Young function $\gamma_{\mathrm{exp}}$ and its convex conjugate $\gamma_{\mathrm{exp}}^*$. Direct calculation shows that
\[
\gamma_{\mathrm{exp}}^*(t) \leq \frac{ t^{1+\epsilon} }{ \epsilon },\qquad \forall\,t\geq 0,\quad \forall\,0<\epsilon\leq 1,
\]
which permits us to conclude the following
\begin{equation}\label{eq1.5.3}
	\aiminnorm{f}_{L^{\gamma_{\mathrm{exp}}^*}} \leq \epsilon^{-1/(1+\epsilon)}\aiminnorm{f}_{L^{1+\epsilon}},\qquad\, \forall\, 0<\epsilon\leq 1.
\end{equation}

\subsubsection{Elliptic regularity.}
The following result, which we borrow from  \cite[Theorem~1]{DT13}, gives an analogue to the elliptic regularity \eqref{eq1.3.2} at the end point $p\rightarrow\infty$.
In our case when $\Omega$ is a polygonal-like domain with maximum aperture $\max \alpha_j\leq \pi/2$, there holds
\begin{equation}\label{eq1.5.5}
	\aiminnorm{D^2\mathrm{G}_\Omega f}_{L^{\gamma_{\mathrm{exp}}}} \leq C_\Omega \aiminnorm{f}_{L^\infty}.
\end{equation}
where the constant $C_\Omega>0$ depends only on the domain $\Omega$.

\begin{remark}
In the case when $\Omega$ is a polygonal-like domain with the aperture $\alpha_j$ of the form $\frac{\pi}{k}$ for some integer $k\geq 2$, the elliptic regularity results in \cite[Proposition 3.1 and Remark 5.2]{BDT13} tell us that
\begin{equation}\label{eq1.5.4}
\aiminnorm{ D^2\mathrm{G}_\Omega f}_{\mathrm{bmo}_r(\Omega)}  \leq C_\Omega \aiminnorm{f}_{\mathrm{bmo}_z(\Omega) },
\end{equation}
which is a stronger inequality than \eqref{eq1.5.5}. For a definition of the local $\mathrm{bmo}_\star(\Omega)$ ($\star=z, r$) spaces, see \cite[Section~3.1]{BDT13} or \cite{CDS99}.
The extension of \eqref{eq1.5.4} to general polygonal-like domains is still an open problem, to the best of our knowledge (see also \cite[Remark 1.1]{DT13}). \qed
\end{remark}

\section{Approximate solutions}\label{sec3}
Inspired by \cite{Bar72} where the vanishing viscosity method is applied to the 2D Euler equations in a bounded smooth domain, we here utilize the same method to study the 2D Euler-Boussinesq system. Hence, we introduce the 2D Boussinesq system with full viscosity $0<\nu\leq 1$ and diffusivity $\kappa=1$:
\begin{equation}\begin{cases}\label{eq2.1.3}
	\partial_t\boldsymbol u_\nu  -\nu\Delta\boldsymbol u_\nu+ \boldsymbol u_\nu\cdot \nabla \boldsymbol u_\nu + \nabla\pi_\nu = \theta_\nu \boldsymbol e_2 + S\boldsymbol e_2,\quad \boldsymbol e_2=(0,1),\\
	{\mathrm{div}\,}\boldsymbol u_\nu=0,\\
	\partial_t \theta_\nu - \Delta\theta_\nu + \boldsymbol u_\nu\cdot\nabla\theta_\nu +\boldsymbol u_\nu\cdot\nabla S =  \Delta S, \\
\end{cases}\end{equation}
with the initial and boundary conditions
\begin{equation}\begin{split}\label{eq2.1.4}
	\boldsymbol u_\nu(0)=\boldsymbol u_0,\quad\quad &\theta_\nu(0)=\theta_0,\qquad\text{ in }\Omega,\\
	\boldsymbol u_\nu\cdot \boldsymbol n=0,\quad\frac{\partial(\boldsymbol u_\nu\cdot\boldsymbol\tau)}{\partial\boldsymbol n}=0,\quad\quad &\theta_\nu=0,\qquad \text{ on }\partial\Omega.\\
\end{split}\end{equation}
The existence and uniqueness of a global strong solution $(\boldsymbol u_\nu, \theta_\nu)$ of the 2D Boussinesq system \eqref{eq2.1.3}-\eqref{eq2.1.4} in the polygonal-like domain $\Omega$ are classical obtained using the Galerkin procedure, see for example \cite{FMT87, Tem97}. Here, we only need to prove some uniform estimates independent of $\nu$.
\begin{lemma}\label{lem2.1.1}
	Assume that $S\in H^2(\Omega)$, $\boldsymbol u_0\in V$, and $\theta_0\in H_0^1(\Omega)$. Then the solutions  $(\boldsymbol u_\nu, \theta_\nu)$ of \eqref{eq2.1.3}-\eqref{eq2.1.4} satisfy the following estimates
	\begin{equation}\begin{split}\label{eq2.1.6}
		\sup_{t\in[0, t_1]}(\aiminnorm{\boldsymbol u_\nu(t)}_{H^1}^2 + \aiminnorm{\theta_\nu(t)}_{H^1}^2) + \int_0^{t_1}\aiminnorm{\Delta\theta_\nu(t)}^2{\mathrm{d}t} + \nu\int_0^{t_1}\aiminnorm{\boldsymbol u_\nu(t)}_{H^2}^2{\mathrm{d}t} &\leq \mathcal Q_2,\\
		\aiminnorm{\partial_t\boldsymbol u_\nu}_{L^2(0, t_1; L^{3/2}(\Omega))} + \aiminnorm{\partial_t\theta_\nu}_{L^2(0, t_1; L^{2}(\Omega))} &\leq \mathcal Q_2,
	\end{split}\end{equation}
	where $\mathcal Q_2$ is a positive function independent of $\nu$ $(0<\nu\leq 1$) defined by
	\[
	\mathcal Q_2:=\mathcal Q_2(t_1, \aiminnorm{\boldsymbol u_0}_{H^1}, \aiminnorm{\theta_0}_{H^1}, \aiminnorm{S}_{H^2}),
	\]
	which is increasing in all its arguments.
\end{lemma}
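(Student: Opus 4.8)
The plan is to obtain \eqref{eq2.1.6} through a sequence of \emph{a priori} energy estimates, performed formally on $(\boldsymbol u_\nu,\theta_\nu)$ but rigorously justified on the Galerkin approximations used to construct the solution. The guiding principle throughout is that every constant must be tracked to be independent of $\nu\in(0,1]$, and that the viscous terms, which carry a favorable sign, should be \emph{retained} as dissipation rather than discarded (this is what produces the last term on the first line of \eqref{eq2.1.6}). No limit $\nu\to0$ is taken here; only the $\nu$-uniform bounds are sought, and the estimates are arranged sequentially so that each one feeds the next.

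First I would test the momentum equation in \eqref{eq2.1.3} against $\boldsymbol u_\nu$ and the temperature equation against $\theta_\nu$. Since ${\mathrm{div}\,}\boldsymbol u_\nu=0$ and $\boldsymbol u_\nu\cdot\boldsymbol n=0$, both advection terms and the pressure term drop out, and adding the two identities gives
\begin{equation*}
\begin{split}
\frac12\frac{\mathrm{d}}{\mathrm{d}t}\big(\aiminnorm{\boldsymbol u_\nu}^2+\aiminnorm{\theta_\nu}^2\big)&+\nu\aiminnorm{\nabla\boldsymbol u_\nu}^2+\aiminnorm{\nabla\theta_\nu}^2\\
&=\aimininner{\theta_\nu\boldsymbol e_2+S\boldsymbol e_2}{\boldsymbol u_\nu}_{L^2}-\aimininner{\boldsymbol u_\nu\cdot\nabla S}{\theta_\nu}_{L^2}+\aimininner{\Delta S}{\theta_\nu}_{L^2}.
\end{split}
\end{equation*}
Bounding the right-hand side by Cauchy--Schwarz and Young's inequality and applying Gronwall's lemma yields $\boldsymbol u_\nu\in L^\infty(0,t_1;L^2)$, $\theta_\nu\in L^\infty(0,t_1;L^2)$, $\nabla\theta_\nu\in L^2(0,t_1;L^2)$ and $\sqrt\nu\,\nabla\boldsymbol u_\nu\in L^2(0,t_1;L^2)$, all controlled by a quantity of the form $\mathcal Q_2$.

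Next I would take the curl of the momentum equation to obtain $\partial_t\omega_\nu-\nu\Delta\omega_\nu+\boldsymbol u_\nu\cdot\nabla\omega_\nu=\partial_x(\theta_\nu+S)$ for $\omega_\nu={\mathrm{curl}\,}\boldsymbol u_\nu$, and test it against $\omega_\nu$. The transport term vanishes, the source obeys $\aimininner{\partial_x(\theta_\nu+S)}{\omega_\nu}_{L^2}\le(\aiminnorm{\nabla\theta_\nu}+\aiminnorm{\nabla S})\aiminnorm{\omega_\nu}$ and is therefore integrable in time by the previous step, so Young's inequality and Gronwall give $\omega_\nu\in L^\infty(0,t_1;L^2)$ and $\sqrt\nu\,\nabla\omega_\nu\in L^2(0,t_1;L^2)$; the Biot--Savart correspondence then upgrades these to $\boldsymbol u_\nu\in L^\infty(0,t_1;H^1)$ and $\sqrt\nu\,\boldsymbol u_\nu\in L^2(0,t_1;H^2)$, i.e. the first and fourth terms of \eqref{eq2.1.6}. \textbf{The main obstacle lies precisely here:} integrating $-\nu\aimininner{\Delta\omega_\nu}{\omega_\nu}_{L^2}$ by parts produces a boundary integral $\nu\int_{\partial\Omega}\omega_\nu\,\frac{\partial\omega_\nu}{\partial\boldsymbol n}$, and to keep $\nu\aiminnorm{\nabla\omega_\nu}^2$ as usable dissipation this term must be controlled uniformly in $\nu$. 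This is exactly the purpose of the slip condition $\partial(\boldsymbol u_\nu\cdot\boldsymbol\tau)/\partial\boldsymbol n=0$ in \eqref{eq2.1.4}: it forces $\omega_\nu|_{\partial\Omega}$ to equal a curvature-weighted multiple of the tangential trace of $\boldsymbol u_\nu$, which vanishes on the flat portions of $\partial\Omega$, so that after lifting this inhomogeneous Dirichlet datum the boundary integral is absorbed into already-bounded quantities. The corners of the piecewise-$\mathcal C^2$ boundary, where $\boldsymbol\tau$ jumps, demand additional care; this is the standard Bardos mechanism recalled in Appendix~\ref{sec-app-euler}.

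Finally I would test the temperature equation against $-\Delta\theta_\nu$, producing $\tfrac12\frac{\mathrm{d}}{\mathrm{d}t}\aiminnorm{\nabla\theta_\nu}^2+\aiminnorm{\Delta\theta_\nu}^2$ on the left. The only delicate term, $\aimininner{\boldsymbol u_\nu\cdot\nabla\theta_\nu}{\Delta\theta_\nu}_{L^2}$, is treated by H\"older together with the two-dimensional Gagliardo--Nirenberg bound $\aiminnorm{\nabla\theta_\nu}_{L^4}\le C\aiminnorm{\nabla\theta_\nu}^{1/2}\aiminnorm{\theta_\nu}_{H^2}^{1/2}$ and the $L^4$-control of $\boldsymbol u_\nu$ furnished by the $H^1$-bound above; Young's inequality absorbs $\aiminnorm{\Delta\theta_\nu}^2$ into the left-hand side, and Gronwall closes the estimate, giving $\theta_\nu\in L^\infty(0,t_1;H^1)\cap L^2(0,t_1;H^2)$. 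For the time derivatives, $\partial_t\theta_\nu$ is read off the temperature equation, every remaining term already lying in $L^2(0,t_1;L^2)$, while for $\partial_t\boldsymbol u_\nu$ I would apply the Leray projection $\mathrm P_\Omega$, which is bounded on $L^{3/2}(\Omega)$ by Proposition~\ref{prop1.1.1}(ii), to eliminate the pressure; then $\aiminnorm{\boldsymbol u_\nu\cdot\nabla\boldsymbol u_\nu}_{L^{3/2}}\le\aiminnorm{\boldsymbol u_\nu}_{L^6}\aiminnorm{\nabla\boldsymbol u_\nu}_{L^2}\le C\aiminnorm{\boldsymbol u_\nu}_{H^1}^2$ and the viscous contribution is $O(\sqrt\nu)$ by $\sqrt\nu\,\boldsymbol u_\nu\in L^2(0,t_1;H^2)$. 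Collecting all bounds into a single increasing function of $(t_1,\aiminnorm{\boldsymbol u_0}_{H^1},\aiminnorm{\theta_0}_{H^1},\aiminnorm{S}_{H^2})$ yields \eqref{eq2.1.6}; note that the global (rather than merely local-in-time) character of the estimate is guaranteed by the \emph{a priori} $H^1$-bound on $\boldsymbol u_\nu$, which keeps the Gronwall factor in the temperature estimate integrable on all of $[0,t_1]$.
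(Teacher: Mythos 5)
Your overall scheme coincides with the paper's: an $L^2$ energy estimate for $(\boldsymbol u_\nu,\theta_\nu)$, then an $L^2$ estimate for the vorticity $\omega_\nu$ upgraded to $H^1$/$H^2$ bounds on $\boldsymbol u_\nu$ via the Biot--Savart law \eqref{eq1.3.4}, then the $H^1$ estimate for $\theta_\nu$ by testing against $-\Delta\theta_\nu$, and finally the time derivatives via the Leray projection of Proposition \ref{prop1.1.1} and by reading $\partial_t\theta_\nu$ off the equation. Those steps, including your sequential Gronwall arrangement (closing the vorticity bound first, using only $\int_0^{t_1}\|\nabla\theta_\nu\|^2\,\mathrm{d}t$ from the energy estimate, rather than the paper's coupled Gronwall) and the $O(\sqrt\nu)$ treatment of $\nu\Delta\boldsymbol u_\nu$ in $L^2(0,t_1;L^{3/2})$, are correct and equivalent to the paper's.

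The gap is at the step you yourself flag as the main obstacle. The paper never lifts any inhomogeneous boundary datum: it reads the boundary conditions \eqref{eq2.1.4} as implying $\omega_\nu=0$ on $\partial\Omega$, via $\omega_\nu=\partial_{\boldsymbol\tau}(\boldsymbol u_\nu\cdot\boldsymbol n)-\partial_{\boldsymbol n}(\boldsymbol u_\nu\cdot\boldsymbol\tau)=0$, so the vorticity solves a homogeneous Dirichlet problem, $-\nu\langle\Delta\omega_\nu,\omega_\nu\rangle=\nu\|\nabla\omega_\nu\|^2$ holds exactly, and no boundary integral ever appears; this Bardos--Lions free boundary condition is precisely why this slip condition is imposed on the approximating system. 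You instead take $\omega_\nu|_{\partial\Omega}$ to be a curvature multiple of $\boldsymbol u_\nu\cdot\boldsymbol\tau$ and assert that ``after lifting this inhomogeneous Dirichlet datum the boundary integral is absorbed into already-bounded quantities.'' That assertion is unsupported and does not follow from anything established at that stage: the offending term is $\nu\int_{\partial\Omega}\omega_\nu\,\partial_{\boldsymbol n}\omega_\nu$, and neither $\partial_{\boldsymbol n}\omega_\nu$ on $\partial\Omega$ nor the time derivative of a solution-dependent lift (which would require bounds on $\partial_t\boldsymbol u_\nu$ at the boundary) is controlled. Making a nonzero vorticity trace work is the substance of the Navier-slip literature (energy estimates on the symmetric gradient with boundary terms of the type $\nu\int_{\partial\Omega}\kappa|\boldsymbol u_\nu\cdot\boldsymbol\tau|^2$), not a one-line absorption, and it would be further complicated on a non-smooth domain. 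Your citation of ``the standard Bardos mechanism recalled in Appendix \ref{sec-app-euler}'' is also misplaced: Proposition \ref{propa.1} concerns the inviscid vorticity transport equation, in which no viscous boundary term exists, so it cannot handle this integral. (Your curvature observation is in itself legitimate on the curved $\mathcal C^2$ portions of $\partial\Omega$, where the paper's identity is only exact up to a curvature correction; note, however, that near the corners the boundary consists of straight segments, so the corners are the one place where no such issue arises --- the opposite of what you claim.) To obtain the lemma, adopt the paper's reading of \eqref{eq2.1.4}: the slip condition is imposed exactly so that $\omega_\nu$ vanishes on $\partial\Omega$, after which your step two goes through with no boundary term at all.
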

In the sequel, the symbol $C$ denotes a generic positive constant, which may depend on the domain $\Omega$ and vary from line to line.
\begin{proof}[Proof of Lemma \ref{lem2.1.1}]
	For the sake of simplicity, we write $(\boldsymbol u, \theta)$ instead of $(\boldsymbol u_\nu, \theta_\nu)$ by  dropping the subscript $\nu$ in the following proof.
	Multiplying \eqref{eq2.1.3}$_1$ with $\boldsymbol u$, integrating in $L^2(\Omega)$, and using the Cauchy-Schwartz inequality, we obtain	
	\begin{equation}\label{eq2.1.7}
		\frac12\frac{\mathrm{d}}{\mathrm{d}t}\aiminnorm{\boldsymbol u}^2 + \nu\aiminnorm{\nabla\boldsymbol u}^2 \leq  \aiminnorm{\boldsymbol u}^2 + \frac12\aiminnorm{\theta}^2 +\frac12\aiminnorm{S}^2.
	\end{equation}

	Taking the  inner product \eqref{eq2.1.3}$_3$ with $\theta$ in $L^2(\Omega)$ and using H\"older's inequality and the Sobolev embedding, we find
	\begin{equation*}\begin{split}
	\frac12\frac{\mathrm{d}}{\mathrm{d}t}\aiminnorm{\theta}^2 + \aiminnorm{\nabla\theta}^2
	&\leq \aiminnorm{\boldsymbol u}\aiminnorm{\nabla S}_{L^4}\aiminnorm{\theta}_{L^4} +\aiminnorm{\Delta S}\aiminnorm{\theta}\\
	&\leq C\aiminnorm{\boldsymbol u}\aiminnorm{S}_{H^2}\aiminnorm{\nabla\theta} + \aiminnorm{S}_{H^2}\aiminnorm{\theta},
	\end{split}\end{equation*}
	which, by  Young's inequality, yields
	\begin{equation}\label{eq2.1.8}
		\frac12\frac{\mathrm{d}}{\mathrm{d}t}\aiminnorm{\theta}^2 + \aiminnorm{\nabla\theta}^2
		\leq C\aiminnorm{S}_{H^2}^2 \aiminnorm{\boldsymbol u}^2 + \frac12\aiminnorm{S}_{H^2}^2 + \frac12\aiminnorm{\theta}^2 + \frac12\aiminnorm{\nabla\theta}^2,
	\end{equation}
	where the constant $C>0$ only depends on the domain $\Omega$.

	Summing \eqref{eq2.1.7} and \eqref{eq2.1.8} together, we arrive at
	\[
	\frac{\mathrm{d}}{\mathrm{d}t}(\aiminnorm{\boldsymbol u}^2 + \aiminnorm{\theta}^2) + 2\nu\aiminnorm{\nabla\boldsymbol u}^2 + \aiminnorm{\nabla\theta}^2
	\leq C\aiminnorm{S}_{H^2}^2 + C(\aiminnorm{S}_{H^2}^2 + 1)( \aiminnorm{\boldsymbol u}^2 + \aiminnorm{\theta}^2 ).
	\]
	Applying the Gronwall lemma, we obtain
	\begin{equation}\begin{split}\label{eq2.1.9}
		\sup_{t\in[0, t_1]}(\aiminnorm{\boldsymbol u(t)}^2+\aiminnorm{\theta(t)}^2)
		+2\nu\int_0^{t_1}\aiminnorm{\nabla\boldsymbol u(t)}^2{\mathrm{d}t} + \int_0^{t_1}\aiminnorm{\nabla\theta(t)}^2{\mathrm{d}t}\\
		\leq e^{C t_1 (\aiminnorm{S}_{H^2}^2+1)} \big(\aiminnorm{\boldsymbol u_0}^2+\aiminnorm{\theta_0}^2 +C t_1\aiminnorm{S}_{H^2}^2\big).
	\end{split}\end{equation}

	In order to find the uniform $H^1$-estimate, we need to use vorticity formulation together with the Biot-Savart law. Let $\omega={\mathrm{curl}\,}\boldsymbol u=\partial_x u_2 - \partial_y u_1$, then the vorticity $\omega$ satisfies
	\begin{equation}\label{eq2.1.11}
		\partial_t\omega -\nu\Delta\omega + \boldsymbol u\cdot\nabla\omega = \partial_x\theta+\partial_x S,
	\end{equation}
	with the Dirichlet boundary condition
	\[
	\omega = 0,\quad\text{ on }\partial\Omega.
	\]
	That $\omega$ satisfies the homogeneous Dirichlet boundary condition is from the boundary conditions \eqref{eq2.1.4}$_2$ and the calculation:
	\[
	\omega={\mathrm{curl}\,}\boldsymbol u={\mathrm{curl}\,}( (\boldsymbol u\cdot\boldsymbol\tau)\boldsymbol\tau + (\boldsymbol u\cdot\boldsymbol n)\boldsymbol n)=\frac{\partial(\boldsymbol u\cdot\boldsymbol n)}{\partial \boldsymbol\tau}  - \frac{\partial(\boldsymbol u\cdot\boldsymbol \tau)}{\partial\boldsymbol n}=0,\quad\text{ on }\partial\Omega.
	\]
	By the Biot-Savart law \eqref{eq1.3.4} (see also \cite{Kat67, Bar72}), we have
	\begin{equation}\label{eq2.1.12}
		\aiminnorm{\boldsymbol u}_{H^1}^2 \leq C\aiminnorm{\omega}_{L^2}^2,\qquad
		\aiminnorm{\boldsymbol u}_{H^2}^2 \leq C\aiminnorm{\omega}_{H^1}^2\leq C\aiminnorm{\nabla\omega}^2,
	\end{equation}
	where the Poincar\'e inequality is employed for the last inequality.
	
	Taking the inner product \eqref{eq2.1.11} with $\omega$ in $L^2(\Omega)$ and using the Cauchy-Schwartz inequality gives
	\begin{equation}\label{eq2.1.13}
		\frac12\frac{\mathrm{d}}{\mathrm{d}t}\aiminnorm{\omega}^2 + \nu\aiminnorm{\nabla\omega}^2\leq \aiminnorm{\omega}^2  + \frac12\aiminnorm{\partial_x\theta}^2 + \frac12\aiminnorm{\partial_xS}^2.
	\end{equation}
	Taking the  inner product \eqref{eq2.1.3}$_3$ with $-\Delta\theta$ in $L^2(\Omega)$ and using H\"older's and Ladyzhenskaya's inequalities, we arrive at
	\begin{equation*}\begin{split}
		\frac12\frac{\mathrm{d}}{\mathrm{d}t}\aiminnorm{\nabla\theta}^2 + \aiminnorm{\Delta\theta}^2
		\leq \aiminnorm{\boldsymbol u}_{L^4}\aiminnorm{\nabla\theta}_{L^4}\aiminnorm{\Delta\theta} + \aiminnorm{\boldsymbol u}_{L^4}\aiminnorm{\nabla S}_{L^4}\aiminnorm{\Delta\theta} + \aiminnorm{\Delta S}\aiminnorm{\Delta\theta}\\
		\leq C\aiminnorm{\boldsymbol u}^{1/2}\aiminnorm{\boldsymbol u}_{H^1}^{1/2}\aiminnorm{\nabla\theta}^{1/2}\aiminnorm{\Delta\theta}^{3/2}
		+C\aiminnorm{\boldsymbol u}_{H^1}\aiminnorm{S}_{H^2}\aiminnorm{\Delta\theta} + \aiminnorm{ S}_{H^2}\aiminnorm{\Delta\theta},
	\end{split}\end{equation*}
	which, by Young's inequality, yields
	\begin{equation}\begin{split}\label{eq2.1.14}
	\frac12\frac{\mathrm{d}}{\mathrm{d}t}\aiminnorm{\nabla\theta}^2 + \aiminnorm{\Delta\theta}^2	\leq
	C\aiminnorm{\boldsymbol u}^{2} \aiminnorm{\nabla\theta}^2\aiminnorm{\boldsymbol u}_{H^1}^2 + C\aiminnorm{S}_{H^2}^2\aiminnorm{\boldsymbol u}_{H^1}^2+C\aiminnorm{S}^2_{H^2}+ \frac12\aiminnorm{\Delta\theta}^2.
	\end{split}\end{equation}
	
	Combining the estimates \eqref{eq2.1.13} and \eqref{eq2.1.14} and using \eqref{eq2.1.12}, we see that
	\begin{equation*}\begin{split}
		\frac{\mathrm{d}}{\mathrm{d}t}(\aiminnorm{\omega}^2 + \aiminnorm{\nabla\theta}^2) +& 2\nu\aiminnorm{\nabla\omega}^2 + \aiminnorm{\Delta\theta}^2\\
		&\leq C(\aiminnorm{\boldsymbol u}^2\aiminnorm{\nabla\theta}^2 + \aiminnorm{S}^2_{H^2} + 1 )(\aiminnorm{\omega}^2 + \aiminnorm{\nabla\theta}^2) + C\aiminnorm{S}_{H^2}^2.
	\end{split}\end{equation*}
	Applying the Gronwall lemma, we obtain
	\begin{equation}\begin{split}\label{eq2.1.16}
		&\sup_{t\in[0, t_1]}(\aiminnorm{\omega(t)}^2 + \aiminnorm{\nabla\theta(t)}^2) + 2\nu\int_0^{t_1}\aiminnorm{\nabla\omega(t)}^2{\mathrm{d}t} + \int_0^{t_1}\aiminnorm{\Delta\theta(t)}^2{\mathrm{d}t}\\
		&\leq (\aiminnorm{\omega_0}^2 + \aiminnorm{\nabla\theta_0}^2 + Ct_1\aiminnorm{S}_{H^2}^2)\exp\big\{ C( \sup_{t\in[0, t_1]}\aiminnorm{\boldsymbol u(t)}^2\int_0^{t_1}\aiminnorm{\nabla\theta}^2{\mathrm{d}t} + t_1\aiminnorm{S}^2_{H^2} + t_1) \big\},
	\end{split}\end{equation}
	 which implies the first inequality in \eqref{eq2.1.6} by taking the estimate \eqref{eq2.1.9} and the Biot-Savart law \eqref{eq2.1.12} into consideration.
	
	 We now turn to the second inequality in \eqref{eq2.1.6} on the time-derivatives of $(\boldsymbol u, \theta)$.
	 Applying the projection operator $\mathrm{P}_\Omega$ to \eqref{eq2.1.3}$_1$ gives the identity
	 \begin{equation*}
		 \partial_t\boldsymbol u=\mathrm{P}_\Omega(\nu\Delta\boldsymbol u - \boldsymbol u\nabla\boldsymbol u + \boldsymbol f),
	 \end{equation*}
	 where $\boldsymbol f=\theta\boldsymbol e_2 + S\boldsymbol e_2$. Noticing that by \eqref{eq2.1.16} and \eqref{eq2.1.12}, $\nu\aiminnorm{\Delta\boldsymbol u}_{L^2(0,t_1; L^2(\Omega))}$ is uniformly bounded independent of $\nu$ ($0<\nu\leq 1$) and using the estimate \eqref{eq2.1.16} again, the arguments for \eqref{eqa.13} in the case when $p=2$ tell that
	 \begin{equation}\label{eq2.1.20}
	 \partial_t\boldsymbol u\in L^2(0, t_1; L^s(\Omega)), \qquad \forall 1\leq s<2.
	 \end{equation}
	 Hence,
	 \[
	 \partial_t\boldsymbol u \in L^2(0, t_1; L^{3/2}(\Omega)).
	 \]
	 Regarding $\partial_t\theta$, we take a test function $\tilde\theta\in L^2(0, t_1; L^2(\Omega))$ with norm at most $1$ and find from \eqref{eq2.1.3}$_3$ that
	 \[
	 |\aimininner{\partial_t\theta}{\tilde\theta}|\leq \aiminnorm{\Delta\theta}\aiminnorm{\tilde\theta} + \aiminnorm{\boldsymbol u}_{L^4}\aiminnorm{\nabla\theta}_{L^4}\aiminnorm{\tilde\theta} +  \aiminnorm{\boldsymbol u}_{L^4}\aiminnorm{\nabla S}_{L^4}\aiminnorm{\tilde\theta} + \aiminnorm{\Delta S}\aiminnorm{\tilde\theta}.
	 \]
	 Thanks to  the uniform estimate \eqref{eq2.1.6}$_1$ again, we obtain
	 \[
	 \partial_t \theta \in L^2(0, t_1; L^{2}(\Omega) ).
	 \]
	 Therefore, we finished proving the inequality \eqref{eq2.1.6}. This ends the proof of Lemma \ref{lem2.1.1}.
\end{proof}

\section{Proof of Theorem \ref{thm2.1}}\label{sec4}
The goal here is to prove the main result of this article and we divide it to three parts. We first prove the existence of \emph{suitable weak solution} for the 2D Euler-Boussinesq system \eqref{eq1.1.3}-\eqref{eq1.1.4}, then improve the regularity of the solution, and finally show the uniqueness of the solution.

\subsection{Existence of \emph{suitable weak solution}}
Thanks to the fact that the estimate \eqref{eq2.1.6} in Lemma \ref{lem2.1.1} is independently of $\nu$, we infer the existence of a couple $(\boldsymbol u, \theta)$ such that
\begin{equation*}\begin{split}
\boldsymbol u\in L^\infty(0, t_1; V),\qquad \partial_t\boldsymbol u\in L^2(0, t_1; L^{3/2}(\Omega)),\\
\theta\in L^\infty(0, t_1; H^1_0(\Omega))\cap L^2(0, t_1; H^2(\Omega)),\qquad \partial_t\theta\in L^2(0, t_1; L^{2}(\Omega)),
\end{split}\end{equation*}
for which the following convergences up to not relabeled subsequences  are true.
\begin{itemize}
	\item $\boldsymbol u_\nu \rightarrow \boldsymbol u$ weakly-$*$ in $L^\infty(0, t_1; V)$ and $\partial_t\boldsymbol u_\nu\rightarrow \partial_t\boldsymbol u$ weakly in $L^2(0, t_1; L^{3/2}(\Omega))$. As a consequence (see e.g. \cite{Lio69}), $\boldsymbol u_\nu\to \boldsymbol u$ strongly in $L^2(0,t_1; L^6(\Omega))$.
	\item $\theta_\nu \rightarrow \theta$ weakly-$*$ in $L^\infty(0, t_1; H^1_0(\Omega))$ and weakly in $L^2(0, t_1; H^2(\Omega))$, and $\partial_t\theta_\nu\rightarrow \partial_t\theta$ weakly in $L^2(0, t_1; L^{2}(\Omega))$. Therefore, $\theta_\nu\rightarrow \theta$ strongly in $L^2(0, t_1; H_0^1(\Omega))$.
\end{itemize}
By interpolation (see e.g. \cite{LM72}), we also have $\boldsymbol u\in \mathcal C([0, t_1]; L^2_{\boldsymbol\tau}(\Omega))$ and $\theta\in \mathcal C([0, t_1]; H_0^1(\Omega))$. The estimate  \eqref{eq2.0.4} in Theorem \ref{thm2.1} directly follows  from the uniform estimate \eqref{eq2.1.6} which is independent of $\nu$.

Let $\tilde{\boldsymbol u}\in L_{\boldsymbol\tau}^3(\Omega)$, $\tilde\theta\in L^2(\Omega)$ and $\psi, \varphi\in\mathcal C^1([0, t_1])$ with $\psi(t_1)=\varphi(t_1)=0$, we then take the $L^2$-inner product \eqref{eq2.1.3} with $(\tilde{\boldsymbol u}\psi(t), \tilde\theta\varphi(t))$, integrate in time from $0$ to $t_1$ and integrate by parts for the first term; we arrive at
\begin{equation}\begin{split}\label{eq2.2.3}
	-\int_0^{t_1}\aimininner{\boldsymbol u_\nu(t)}{\tilde{\boldsymbol u}}\psi'(t){\mathrm{d}t} - \nu\int_0^{t_1}\aimininner{\Delta\boldsymbol u_\nu(t)}{\tilde{\boldsymbol u}}\psi(t){\mathrm{d}t}
	+\int_0^{t_1}\aimininner{\boldsymbol u_\nu(t)\cdot\nabla\boldsymbol u_\nu(t)}{\tilde{\boldsymbol u}}\psi(t){\mathrm{d}t}\\
	=\aimininner{\boldsymbol u_0}{\tilde{\boldsymbol u}}\psi(0) + \int_0^{t_1}\aimininner{\boldsymbol \theta_\nu(t)\boldsymbol e_2 + S\boldsymbol e_2}{\tilde{\boldsymbol u}}\psi(t){\mathrm{d}t},
\end{split}\end{equation}
\begin{equation}\begin{split}\label{eq2.2.4}
	-\int_0^{t_1}\aimininner{\theta_\nu(t)}{\tilde\theta}\varphi(t){\mathrm{d}t} -\int_0^{t_1}\aimininner{\Delta\theta_\nu(t)}{\tilde\theta}\varphi(t){\mathrm{d}t}+\int_0^{t_1}\aimininner{\boldsymbol u_\nu(t)\cdot\nabla\theta_\nu(t)}{\tilde\theta}\varphi(t){\mathrm{d}t}\\
	+\int_0^{t_1}\aimininner{\boldsymbol u_\nu(t)\cdot\nabla S}{\tilde\theta}\varphi(t){\mathrm{d}t}
	=\aimininner{\theta_0}{\tilde\theta}\varphi(0)  + \int_0^{t_1}\aimininner{\Delta S}{\tilde\theta}\varphi(t){\mathrm{d}t}.
\end{split}\end{equation}
Thanks to the uniform estimate \eqref{eq2.1.6} in Lemma \ref{lem2.1.1}, the second term in \eqref{eq2.2.3} converges to zero, that is
\[
\nu\int_0^{t_1}\aimininner{\Delta\boldsymbol u_\nu(t)}{\tilde{\boldsymbol u}}\psi(t){\mathrm{d}t} \rightarrow 0,\qquad\text{ as }\nu\rightarrow 0.
\]
The other linear terms in \eqref{eq2.2.3}-\eqref{eq2.2.4} converge to their corresponding limits in a straightforward manner due to the above convergences. The nonlinear term in \eqref{eq2.2.3} can be written as
\[
\int_0^{t_1}\aimininner{(\boldsymbol u_\nu(t) - \boldsymbol u(t))\cdot\nabla\boldsymbol u_\nu(t)}{\tilde{\boldsymbol u}}\psi(t){\mathrm{d}t}
+\int_0^{t_1}\aimininner{\boldsymbol u(t)\cdot\nabla\boldsymbol u_\nu(t)}{\tilde{\boldsymbol u}}\psi(t){\mathrm{d}t},
\]
and the first term above converges to zero due to the strong convergence of $\boldsymbol u_\nu\rightarrow\boldsymbol u$ in $L^2(0, t_1; L^6(\Omega))$ and the uniform boundedness of $\boldsymbol u_\nu$ in $L^\infty(0, t_1; V)$, and the second term above converges to
\[
\int_0^{t_1}\aimininner{ \boldsymbol u(t)\cdot\nabla\boldsymbol u(t)}{\tilde{\boldsymbol u}}\psi(t){\mathrm{d}t},
\]
because of the weak-$*$ convergence of $\boldsymbol u_\nu\rightarrow \boldsymbol u$ in $L^\infty(0, t_1; V)$.
The convergence of the nonlinear term in \eqref{eq2.2.4} is similar and simpler since we have better convergence for $\theta_\nu$. Therefore, we completed the proof of existence part of Theorem~\ref{thm2.1}.

\subsection{Regularity}
Now, if we assume additionally $\omega_0={\mathrm{curl}\,}\boldsymbol u_0\in L^{\infty}(\Omega)$ and $\theta_0\in H^2(\Omega)$, $S\in H^3(\Omega)$, then we are able to prove $L^\infty$-estimate for the vorticity $\omega$ and hence the $L^p$-estimate for the velocity $\boldsymbol u$ for any $1<p<\infty$ and the uniform $H^2$ and the time average of $H^3(\Omega)$-estimate for $\theta$.

For proving the $L^\infty$-estimate of the vorticity $\omega$, we require the $L^2(0, t_1; W^{1,\infty}(\Omega))$-estimate of the forcing term $\theta + S$ for the Euler equations. Hence, we first need a $L^2(0, t_1; W^{1,\infty}(\Omega))$-regularity of $\theta$, which turns to be the $L^2(0, t_1; H^3(\Omega))$-regularity for $\theta$.
To obtain the time average of $H^3$-regularity for $\theta$, we at least need the uniform $W^{1,4}(\Omega)$-estimate for the velocity $\boldsymbol u$. In conclusion, the plan for this subsection is as follows. We first derive the uniform $W^{1,4}$-estimate for $\boldsymbol u$, then show the uniform $H^2$ and the time average of $H^3$-estimate for $\theta$, and finally prove the $L^\infty$-estimate for the vorticity $\omega$.

From the Ladyzhenskaya's inequality
\[
\aiminnorm{f}_{L^4}^4 \leq \aiminnorm{f}_{L^2}^2\aiminnorm{f}_{H^1}^2,
\]
we deduce that $\theta\in L^4(0, t_1; W^{1,4}(\Omega))$ and by the Sobolev embedding, $S\in W^{1,4}(\Omega)$. Currently, if we only assume $\omega_0={\mathrm{curl}\,}\boldsymbol u_0\in L^{4}(\Omega)$, then applying Proposition \ref{propa.1} with $\boldsymbol f=\theta\boldsymbol e_2 + S\boldsymbol e_2\in  L^4(0, t_1; W^{1,4}(\Omega))$ and $p=4$ shows that
\begin{equation}\begin{split}\label{eq2.2.8}
&\hspace{40pt}\boldsymbol u\in L^\infty(0, t_1; W^{1,4}(\Omega)),\qquad \partial_t\boldsymbol u\in L^4(0, t_1; L^4(\Omega)),\\
&\aiminnorm{\boldsymbol u}_{L^\infty(0, t_1; W^{1,4}(\Omega)) } + \aiminnorm{\partial_t\boldsymbol u}_{L^4(0, t_1; L^4(\Omega))}
\leq \mathcal Q_3(t_1, \aiminnorm{\omega_0}_{L^4}, \aiminnorm{\theta_0}_{H^1}, \aiminnorm{S}_{H^2}).
\end{split}\end{equation}

To obtain the $H^2$-regularity of $\theta$, we differentiate \eqref{eq1.1.3} in time $t$ to find
\begin{equation*}
	\partial_t\theta_t - \Delta\theta_t + \boldsymbol u\cdot\nabla\theta_t + \boldsymbol u_t\cdot\nabla\theta + \boldsymbol u_t\cdot\nabla S = 0.
\end{equation*}
Applying the standard energy estimate, we arrive at
\begin{equation}\begin{split}\label{eq2.2.10}
	\frac12\frac{\mathrm{d}}{\mathrm{d}t} \aiminnorm{\theta_t}^2 + \aiminnorm{\nabla\theta_t}^2
	\leq \aiminnorm{\boldsymbol u_t}_{L^4}\aiminnorm{\nabla\theta}_{L^4}\aiminnorm{\theta_t} +  \aiminnorm{\boldsymbol u_t}_{L^4}\aiminnorm{\nabla S}_{L^4}\aiminnorm{\theta_t},
\end{split}\end{equation}
and by Ladyzhenskaya's inequality and  the Sobolev embedding, the right-hand side is bounded by
\[
C\aiminnorm{\boldsymbol u_t}_{L^4}\aiminnorm{\nabla\theta}^{1/2}\aiminnorm{\Delta\theta}^{1/2}\aiminnorm{\theta_t} +  C\aiminnorm{\boldsymbol u_t}_{L^4}\aiminnorm{S}_{H^2}\aiminnorm{\theta_t}.
\]
Using Young's inequality, we derive from \eqref{eq2.2.10} that
\[
	\frac{\mathrm{d}}{\mathrm{d}t} \aiminnorm{\theta_t}^2 + 2\aiminnorm{\nabla\theta_t}^2
	\leq C\aiminnorm{\boldsymbol u_t}_{L^4}^4 + C\aiminnorm{\nabla\theta}^{2}\aiminnorm{\Delta\theta}^2
	+C\aiminnorm{S}_{H^2}^4 + \aiminnorm{\theta_t}^2.
\]
Thus, the Gronwall lemma implies
\[
\sup_{t\in[0, t_1]}\aiminnorm{\theta_t(t)}^2 + 2\int_0^{t_1} \aiminnorm{\nabla\theta_t(t)}^2 {\mathrm{d}t}
\leq e^{t_1}\big( \aiminnorm{\theta_t(0)}^2 + C\int_0^{t_1}\big[\aiminnorm{\boldsymbol u_t}_{L^4}^4 + \aiminnorm{\nabla\theta}^{2}\aiminnorm{\Delta\theta}^2+\aiminnorm{S}_{H^2}^4 \big]{\mathrm{d}t} \big),
\]
and from the equation \eqref{eq1.1.3}$_3$, one has
\[
\aiminnorm{\theta_t(0)}\leq \aiminnorm{\Delta\theta_0} + \aiminnorm{\boldsymbol u_0}_{L^4}\aiminnorm{\nabla\theta_0}_{L^4} + \aiminnorm{\boldsymbol u_0}_{L^4}\aiminnorm{\nabla S}_{L^4} + \aiminnorm{\Delta S}.
\]
Therefore, together with \eqref{eq2.2.8}, we find
\begin{equation}\label{eq2.2.12}
\sup_{t\in[0, t_1]}\aiminnorm{\theta_t(t)}^2 + 2\int_0^{t_1} \aiminnorm{\nabla\theta_t(t)}^2 {\mathrm{d}t}
\leq \mathcal Q_3(t_1, \aiminnorm{\omega_0}_{L^4}, \aiminnorm{\theta_0}_{H^2}, \aiminnorm{S}_{H^2}).
\end{equation}
From the equation \eqref{eq1.1.3}$_3$ again, we obtain
\[
\aiminnorm{\Delta\theta}\leq \aiminnorm{\theta_t} + \aiminnorm{\boldsymbol u}_{L^4}\aiminnorm{\nabla\theta}_{L^4} + \aiminnorm{\boldsymbol u}_{L^4}\aiminnorm{\nabla S}_{L^4} + \aiminnorm{\Delta S},
\]
which, together with the estimates \eqref{eq2.2.8} and \eqref{eq2.2.12}, immediately gives
\begin{equation}\label{eq2.2.13}
	\sup_{t\in[0, t_1]}\aiminnorm{\Delta\theta(t)} \leq \mathcal Q_3(t_1, \aiminnorm{\omega_0}_{L^4}, \aiminnorm{\theta_0}_{H^2}, \aiminnorm{S}_{H^2}).
\end{equation}
Taking the gradient $\nabla$ on \eqref{eq1.1.3}$_3$ and we similarly have
\begin{equation}\begin{split}\label{eq2.2.14}
\aiminnorm{\nabla\Delta\theta}
&\leq \aiminnorm{\nabla\theta_t} + \aiminnorm{\nabla(\boldsymbol u\cdot\nabla\theta)} + \aiminnorm{\nabla(\boldsymbol u\cdot\nabla S)} + \aiminnorm{\nabla\Delta S}\\
&\leq \aiminnorm{\nabla\theta_t} + \aiminnorm{\nabla\boldsymbol u}_{L^4}\aiminnorm{\nabla\theta}_{L^4} + \aiminnorm{\boldsymbol u}_{L^4}\aiminnorm{\Delta\theta}_{L^4} \\
&\quad+ \aiminnorm{\nabla\boldsymbol u}_{L^4}\aiminnorm{\nabla S}_{L^4} +\aiminnorm{\boldsymbol u}_{L^4}\aiminnorm{\Delta S}_{L^4} + \aiminnorm{S}_{H^3}.
\end{split}\end{equation}
The trouble term in \eqref{eq2.2.14} is $\aiminnorm{\boldsymbol u}_{L^4}\aiminnorm{\Delta\theta}_{L^4}$, which can be estimated by Ladyzhenskaya's and Young's inequalities:
\[
\aiminnorm{\boldsymbol u}_{L^4}\aiminnorm{\Delta\theta}_{L^4} \leq C\aiminnorm{\boldsymbol u}_{L^4}\aiminnorm{\Delta\theta}^{1/2}\aiminnorm{\nabla\Delta\theta}^{1/2} \leq C\aiminnorm{\boldsymbol u}_{L^4}^2\aiminnorm{\Delta\theta} + \frac14\aiminnorm{\nabla\Delta\theta}.
\]
Hence, by the Sobolev embedding, the inequality \eqref{eq2.2.14} becomes
\begin{equation*}\begin{split}
	\aiminnorm{\nabla\Delta\theta}
	\leq C\big(&\aiminnorm{\nabla\theta_t} +
	\aiminnorm{\nabla\boldsymbol u}_{L^4}\aiminnorm{\theta}_{H^1} + \aiminnorm{\boldsymbol u}_{L^4}^2\aiminnorm{\Delta\theta}\\
&+ \aiminnorm{\nabla\boldsymbol u}_{L^4}\aiminnorm{ S}_{H^2} +\aiminnorm{\boldsymbol u}_{L^4}\aiminnorm{ S}_{H^2} + \aiminnorm{S}_{H^3}\big).
\end{split}\end{equation*}
which, by utilizing the estimates \eqref{eq2.2.8} and \eqref{eq2.2.12}-\eqref{eq2.2.13}, shows
\begin{equation}\label{eq2.2.16}
	\int_0^{t_1}\aiminnorm{\nabla\Delta\theta(t)}^2{\mathrm{d}s}\leq  \mathcal Q_3(t_1, \aiminnorm{\omega_0}_{L^4}, \aiminnorm{\theta_0}_{H^2}, \aiminnorm{S}_{H^3}).
\end{equation}
We thus proved the first two estimates in \eqref{eq2.0.6} and we now turn to the $L^\infty$-estimate of the vorticity $\omega$.

By the Sobolev embedding, we  have
\begin{equation}\label{eq2.2.18}
\aiminnorm{\theta\boldsymbol e_2 + S e_2}_{L^2(0, t_1; W^{1,\infty}(\Omega))}\leq \mathcal Q_3(t_1, \aiminnorm{\omega_0}_{L^4}, \aiminnorm{\theta_0}_{H^2}, \aiminnorm{S}_{H^3}).
\end{equation}
At this point, applying Proposition \eqref{eqa.1} again, we read from \eqref{eqa.3} that
\begin{equation*}\begin{split}
\aiminnorm{\omega}_{L^\infty(0, t_1; L^p(\Omega))}
\leq \aiminnorm{{\mathrm{curl}\,}\boldsymbol u_0}_{L^p} + \int_0^{t_1}\aiminnorm{\theta + S}_{W^{1,p}}{\mathrm{d}s},\quad p\geq 2,
\end{split}\end{equation*}
and letting $p\rightarrow\infty$ and using \eqref{eq2.2.18} yield the last estimate in \eqref{eq2.0.6}. This completes the proof of regularity part of Theorem~\ref{thm2.1}.

\subsection{Uniqueness}
Let $(\boldsymbol u_1,  \theta_1)$ and $(\boldsymbol u_2, \theta_2)$ be two solutions of the 2D Euler-Boussinesq system \eqref{eq1.1.3}-\eqref{eq1.1.4} satisfying \eqref{eq2.0.6}. Observe that from \eqref{eq1.5.5} and \eqref{eq2.0.6},
\begin{equation}\label{eq2.2.24}
	\aiminnorm{\nabla \boldsymbol u_j(t)}_{L^{\gamma_{\mathrm{exp}}}} \leq C_\Omega  \aiminnorm{\boldsymbol \omega_j}_{L^{\infty}}
	\leq \mathcal Q_4 ,\quad \omega_j={\mathrm{curl}\,} \boldsymbol u_j,\qquad\forall\,t\in[0, t_1].
\end{equation}
The differences $\boldsymbol u=\boldsymbol u_2 - \boldsymbol u_1$ and $\theta=\theta_2 - \theta_1$ then satisfy the equations
\begin{equation}\begin{cases}\label{eq2.2.25}
	\partial_t\boldsymbol u  + \boldsymbol u_2\cdot \nabla \boldsymbol u +\boldsymbol u\cdot\nabla\boldsymbol u_1+ \nabla\pi = \theta \boldsymbol e_2,\quad \boldsymbol e_2=(0,1),\\
	\partial_t \theta - \Delta\theta + \boldsymbol u_2\cdot\nabla\theta + \boldsymbol u\cdot\nabla\theta_1 +\boldsymbol u\cdot\nabla S = 0, \\
\end{cases}\end{equation}
for some pressure function $\pi=\pi(t,x,y)$.

Taking  the inner product \eqref{eq2.2.25}$_1$ with $\boldsymbol u$ in $L^2(\Omega)$, (legitimately) integrating by parts, and applying the Orlicz space version of H\"older's inequality (see Proposition~\ref{prop1.1.2}), we arrive at
\begin{equation*}\begin{split}
	\frac12\frac{\mathrm{d}}{\mathrm{d}t}\aiminnorm{\boldsymbol u}^2
	&\leq \aiminnorm{\theta}\aiminnorm{\boldsymbol u} + \aiminnorm{\nabla\boldsymbol u_1}_{L^{\gamma_{\mathrm{exp}}}}\aiminnorm{|\boldsymbol u|^2}_{L^{\gamma_{\mathrm{exp}}^*}}, \\
	&\leq \frac14\aiminnorm{\Delta\theta}^2 + C\aiminnorm{\boldsymbol u}^2 + \mathcal Q_4 \epsilon^{-1/(1+\epsilon)}\aiminnorm{ |\boldsymbol u|^2 }_{L^{1+\epsilon}},\qquad \forall\,0<\epsilon\leq 1.
\end{split}\end{equation*}
where we used the inequalities \eqref{eq1.5.3} and \eqref{eq2.2.24} and the Poincar\'e inequality for $\theta$. Furthermore, by the interpolation inequality,
\begin{equation}\begin{split}\label{eq2.2.27}
\frac12\frac{\mathrm{d}}{\mathrm{d}t}\aiminnorm{\boldsymbol u}^2
	&\leq \frac14\aiminnorm{\Delta\theta}^2 + C\aiminnorm{\boldsymbol u}^2 + \mathcal Q_4 \epsilon^{-1/(1+\epsilon)}\aiminnorm{ \boldsymbol u }^{2/(1+\epsilon)}\aiminnorm{\boldsymbol u}_{L^\infty}^{\epsilon/(1+\epsilon)},\qquad \forall\,0<\epsilon\leq 1.	
\end{split}\end{equation}

Multiplying \eqref{eq2.2.25}$_2$ by $-\Delta\theta$ and integrating in $\Omega$, we deduce from  Ladyzhenskaya's and Young's inequalities and the Sobolev embedding that
\begin{equation}\begin{split}\label{eq2.2.28}
	\frac12\frac{\mathrm{d}}{\mathrm{d}t}\aiminnorm{\nabla\theta}^2+\aiminnorm{\Delta\theta}^2
	&\leq C\aiminnorm{\boldsymbol u_2}_{L^4}\aiminnorm{\nabla\theta}^{1/2}\aiminnorm{\Delta\theta}^{3/2} + (\aiminnorm{\nabla\theta_1}_{L^\infty}+\aiminnorm{\nabla S}_{L^\infty})\aiminnorm{\boldsymbol u}\aiminnorm{\Delta\theta} \\
	&\leq C\aiminnorm{\boldsymbol u_2}_{L^4}^4\aiminnorm{\nabla\theta}^2 + C(\aiminnorm{\theta_1}_{H^3}^2 + \aiminnorm{S}_{H^3}^2) \aiminnorm{\boldsymbol u}^2 + \frac14\aiminnorm{\Delta\theta}^2.
\end{split}\end{equation}
Adding the inequalities \eqref{eq2.2.27}-\eqref{eq2.2.28} together gives
\begin{equation}\begin{split}\label{eq2.2.30}
	\frac{\mathrm{d}}{\mathrm{d}t}( \aiminnorm{\boldsymbol u}^2 + \aiminnorm{\nabla\theta}^2 )
	&\leq C\aiminnorm{\boldsymbol u}^2 + \mathcal Q_4 \epsilon^{-1/(1+\epsilon)}\aiminnorm{ \boldsymbol u }^{2/(1+\epsilon)}\aiminnorm{\boldsymbol u}_{L^\infty}^{\epsilon/(1+\epsilon)}\\
	&\quad+C\aiminnorm{\boldsymbol u_2}_{L^4}^4\aiminnorm{\nabla\theta}^2 + C(\aiminnorm{\theta_1}_{H^3}^2 + \aiminnorm{S}_{H^3}^2) \aiminnorm{\boldsymbol u}^2\\
	&\leq \kappa_1 \epsilon^{-1/(1+\epsilon)}( \aiminnorm{\boldsymbol u}^2 + \aiminnorm{\nabla\theta}^2)^{1/(1+\epsilon)}
	+ g(t)( \aiminnorm{\boldsymbol u}^2 + \aiminnorm{\nabla\theta}^2),
\end{split}\end{equation}
where
\begin{equation*}\begin{split}
\kappa_1:&=\mathcal Q_4\aiminnorm{\boldsymbol u}_{L^\infty(\Omega\times(0, t_1))}^{\epsilon/(1+\epsilon)} < \infty,\qquad\text{ independent of }\epsilon,\\
g(t):&=C(1+\aiminnorm{\boldsymbol u_2}_{L^4}^4 +\aiminnorm{\theta_1}_{H^3}^2 + \aiminnorm{S}_{H^3}^2) \in L^1(0, t_1),
\end{split}\end{equation*}
which stems from the estimate \eqref{eq2.0.6}.

We denote by $Y(t):=\aiminnorm{\boldsymbol u(t)}^2 + \aiminnorm{\nabla\theta(t)}^2$,
then the differential inequality \eqref{eq2.2.30} turns into
\begin{equation*}\begin{split}
		\frac{\mathrm{d}}{\mathrm{d}t} Y(t)
		\leq \kappa_1 \epsilon^{-1/(1+\epsilon)} Y(t)^{ 1/(1+\epsilon)} +  g(t)Y(t),
\end{split}\end{equation*}
which, by letting $\widetilde Y(t) = e^{-\int_0^t g(s){\mathrm{d}s}}Y(t)$, implies
\begin{equation}\label{eq2.2.32}
	\frac{\mathrm{d}}{\mathrm{d}t} \widetilde Y(t) \leq \kappa_1 \epsilon^{-1/(1+\epsilon)}  e^{-\int_0^t g(s){\mathrm{d}s}} Y(t)^{ 1/(1+\epsilon)}  \leq \kappa_1 \epsilon^{-1/(1+\epsilon)} \widetilde Y(t)^{ 1/(1+\epsilon)}, \quad \forall\,0<\epsilon\leq 1.
\end{equation}
We compute
\[
\frac{\mathrm{d}}{\mathrm{d}t}\widetilde Y(t) = \frac{\mathrm{d}}{\mathrm{d}t}\bigg(\widetilde Y(t)^{\epsilon/(1+\epsilon)}\bigg)^{(1+\epsilon)/\epsilon}
=\frac{1+\epsilon}{\epsilon} \widetilde Y(t)^{1/(1+\epsilon)} \frac{\mathrm{d}}{\mathrm{d}t} \big(\widetilde Y(t)^{\epsilon/(1+\epsilon)}\big),
\]
and deduce from \eqref{eq2.2.32} that
\begin{equation}\label{eq2.2.33}
\frac{\mathrm{d}}{\mathrm{d}t} \big(\widetilde Y(t)^{\epsilon/(1+\epsilon)}\big)
\leq \epsilon^{\epsilon/(1+\epsilon)} \frac{\kappa_1}{1+\epsilon},\qquad \forall\,0<\epsilon\leq 1.
\end{equation}
Due to the continuity $(\boldsymbol u, \theta)\in \mathcal C([0, t_1]; L_{\boldsymbol\tau}^2(\Omega))\times\mathcal C([0, t_1]; H_0^1(\Omega))$, the functional $\widetilde Y$ is continuous on $[0, t_1]$. Noting that $\widetilde Y(0)=0$ and integrating \eqref{eq2.2.33} in time $(0, t)$ gives
\begin{equation}\label{eq2.2.34}
	\widetilde Y(t)
	\leq \epsilon \bigg(\frac{\kappa_1 t}{1+\epsilon}\bigg)^{(1+\epsilon)/\epsilon},\qquad \forall t>0, \qquad \forall\,0<\epsilon\leq 1.
\end{equation}
Choosing $t^*>0$ small enough such that $\kappa_1 t^*/(1+\epsilon)\leq \kappa_1 t^*/2 < 1$ and letting $\epsilon$ tend to $0$ in \eqref{eq2.2.34} entails that $\widetilde Y(t) \equiv 0$ on $[0, t^*]$. By the induction method, we can conclude that $\widetilde Y(t) \equiv 0$ and hence $Y(t)\equiv 0$ on $[0, t_1]$. This completes the proof of uniqueness part of Theorem~\ref{thm2.1}.

\begin{remark}
We note that the proof of uniqueness shows the continuity of the solution semigroup in the topology of $L_{\boldsymbol\tau}^2(\Omega)\times H_0^1(\Omega)$ within the set $\aiminset{\boldsymbol u\in L_{\boldsymbol\tau}^2(\Omega)\,:\,{\mathrm{curl}\,}\boldsymbol u\in L^\infty(\Omega)}\times H_0^1(\Omega)\cap H^2(\Omega)$. It is not clear whether the semigroup is continuous on $L^p_{\boldsymbol\tau}(\Omega)\times H^2(\Omega)$ for some $p>2$.	
\end{remark}

\appendix
\section{A preliminary result for the 2D Euler equations}\label{sec-app-euler}
In the appendix, we consider the standard 2D Euler equations in the polygonal-like domain $\Omega$ which read
\begin{equation}\begin{cases}\label{eqa.1}
\partial_t\boldsymbol u  + \boldsymbol u\cdot \nabla \boldsymbol u + \nabla\pi = \boldsymbol f,\\
	{\mathrm{div}\,}\boldsymbol u=0,\\	
\end{cases}\end{equation}
with the initial and boundary conditions
\begin{equation}
	\boldsymbol u(0)=\boldsymbol u_0,\text{ in }\Omega,\qquad \boldsymbol u\cdot\boldsymbol n=0,\text{ on }\partial\Omega.
\end{equation}
The 2D Euler equations can be expressed in terms of vorticity which allows to underline the conservation of the vorticity. This fact will turn out to be of great relevance in the analysis of the two-dimensional flow as we will see below. The vorticity formulation of the Euler equations also has the advantage of having eliminated the pressure term $\nabla\pi$ and it reads
\begin{equation}\begin{cases}\label{eqa.2}
	\partial_t \omega  + \boldsymbol u\cdot \nabla \omega  = {\mathrm{curl}\,}\boldsymbol f,\\
	{\mathrm{div}\,}\boldsymbol u=0,\qquad \boldsymbol u=\boldsymbol{\mathrm{K}}_\Omega\omega=\nabla^{\perp}\mathrm{G}_{\Omega} \omega,\\
	\omega(0)={\mathrm{curl}\,}\boldsymbol u_0,
\end{cases}\end{equation}
where $\omega={\mathrm{curl}\,}\boldsymbol u=\partial_x u_2 - \partial_y u_1$.

We recall the following standard $L^p$ a priori estimates for the 2D Euler equations (see for example \cite[Lemmas~4.3, 4.5]{BDT13}).
\begin{proposition}\label{propa.1}
	Let $\Omega$ be a polygonal-like domain with maximum aperture $\max\alpha_j\leq \pi/2$ and let $2\leq p<\infty$ and define (see \cite[Remark 4.1]{BDT13})
	\[
	s(p)=\begin{cases}
		\text{any }s\in[1,2), &p=2,\\
		p, &p>2.
	\end{cases}
	\]
	Assume that $\omega_0={\mathrm{curl}\,}\boldsymbol u_0$ belongs to $L^{p}(\Omega)$ and $\boldsymbol f$ belongs to $L^2(0, t_1; W^{1,p}(\Omega))$. If $\boldsymbol u$ is a solution of \eqref{eqa.1}, then
	\begin{equation}\label{eqa.3}
	\omega={\mathrm{curl}\,}\boldsymbol u\in L^\infty(0, t_1; L^p(\Omega)), \quad \aiminnorm{\omega}_{L^\infty(0, t_1; L^p(\Omega) )}\leq \aiminnorm{{\mathrm{curl}\,}\boldsymbol u_0}_{L^p} + \int_0^{t_1}\aiminnorm{{\mathrm{curl}\,}\boldsymbol f}_{L^p}{\mathrm{d}s},
	\end{equation}
	and for all $2\leq p<\infty$, there holds
	\begin{equation}\begin{split}\label{eqa.4}
		\boldsymbol u\in L^\infty(0, t_1; W^{1,p}(\Omega)),\qquad \partial_t\boldsymbol u\in L^2(0, t_1; L^{s(p)}(\Omega)),\\
		\aiminnorm{\boldsymbol u}_{ L^\infty(0, t_1; W^{1,p}(\Omega)) } + \aiminnorm{\partial_t \boldsymbol u}_{ L^2(0, t_1; L^{s(p)}(\Omega)) }\leq \mathcal Q_8,
	\end{split}\end{equation}
	where $\mathcal Q_8$ is a positive function defined by
	\begin{equation}\label{eqa.5}
		\mathcal Q_8:=\mathcal Q_8(p, \aiminnorm{\omega_0}_{L^{p}}, \aiminnorm{\boldsymbol f}_{L^2(0, t_1; W^{1,p}(\Omega))}),
	\end{equation}
	which is increasing on its arguments.
	
	Furthermore, if $\boldsymbol f\in L^q(0, t_1; W^{1,p}(\Omega))$ for some $q\geq 2$, then we actually have
	\begin{equation}\label{eqa.6}
	\partial_t\boldsymbol u\in L^q(0, t_1; L^{s(p)}(\Omega)),\quad
	\aiminnorm{\partial_t \boldsymbol u}_{ L^q(0, t_1; L^{s(p)}(\Omega)) }\leq  \mathcal Q_9(p, \aiminnorm{\omega_0}_{L^{p}}, \aiminnorm{\boldsymbol f}_{L^q(0, t_1; W^{1,p}(\Omega))}).	
	\end{equation}
\end{proposition}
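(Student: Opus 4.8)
The plan is to work entirely with the vorticity formulation \eqref{eqa.2} and to recover the velocity through the Biot--Savart law \eqref{eq1.3.4}, establishing \eqref{eqa.3}, \eqref{eqa.4} and \eqref{eqa.6} in that order. Since the statement is about a priori estimates, I would run all the manipulations below on a smooth approximating family (for instance the vanishing-viscosity approximations already in hand) and pass to the limit at the end, so that every integration by parts is legitimate.

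First I would prove \eqref{eqa.3} by testing the transport equation \eqref{eqa.2}$_1$ against $|\omega|^{p-2}\omega$ in $L^2(\Omega)$. Because $\boldsymbol u$ is divergence-free and tangent to the boundary, the advection term drops out: writing $(\boldsymbol u\cdot\nabla\omega)\,|\omega|^{p-2}\omega=\tfrac1p\,\boldsymbol u\cdot\nabla(|\omega|^p)$ and integrating by parts leaves only
\[
\frac1p\int_\Omega{\mathrm{div}\,}\boldsymbol u\,|\omega|^p\,{\mathrm{d}x}{\mathrm{d}y}=0
\quad\text{and}\quad
\frac1p\int_{\partial\Omega}(\boldsymbol u\cdot\boldsymbol n)\,|\omega|^p\,\mathrm{d}S=0,
\]
the first vanishing since ${\mathrm{div}\,}\boldsymbol u=0$ and the second since $\boldsymbol u\cdot\boldsymbol n=0$, so that the corners of $\Omega$ cause no difficulty here. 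What survives is $\tfrac1p\tfrac{\mathrm{d}}{\mathrm{d}t}\aiminnorm{\omega}_{L^p}^p\leq\aiminnorm{{\mathrm{curl}\,}\boldsymbol f}_{L^p}\aiminnorm{\omega}_{L^p}^{p-1}$ by H\"older's inequality; dividing by $\aiminnorm{\omega}_{L^p}^{p-1}$ and integrating in time yields \eqref{eqa.3}.

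The spatial regularity in \eqref{eqa.4} is then immediate: since $\boldsymbol u=\boldsymbol{\mathrm{K}}_\Omega\omega$ and $\boldsymbol{\mathrm{K}}_\Omega\in\mathscr L(L^p(\Omega),W^{1,p}(\Omega))$ by \eqref{eq1.3.4}, we have $\aiminnorm{\boldsymbol u}_{W^{1,p}}\leq C\aiminnorm{\omega}_{L^p}$, so the $L^\infty(0,t_1;L^p)$-bound on $\omega$ transfers to an $L^\infty(0,t_1;W^{1,p})$-bound on $\boldsymbol u$. For the time derivative I would apply the Leray projection $\mathrm{P}_\Omega$ to \eqref{eqa.1}$_1$ to kill the pressure, giving $\partial_t\boldsymbol u=\mathrm{P}_\Omega(-\boldsymbol u\cdot\nabla\boldsymbol u+\boldsymbol f)$ (using that $\partial_t\boldsymbol u$ is itself divergence-free), and then invoke the $L^{s(p)}$-boundedness of $\mathrm{P}_\Omega$ from Proposition \ref{prop1.1.1}. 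This is exactly where the exponent $s(p)$ and its splitting at $p=2$ enter: for $p>2$ the embedding $W^{1,p}(\Omega)\hookrightarrow L^\infty(\Omega)$ gives $\aiminnorm{\boldsymbol u\cdot\nabla\boldsymbol u}_{L^p}\leq\aiminnorm{\boldsymbol u}_{L^\infty}\aiminnorm{\nabla\boldsymbol u}_{L^p}\leq C\aiminnorm{\boldsymbol u}_{W^{1,p}}^2$, whereas for $p=2$ this critical embedding fails, so I would instead fix any $s\in[1,2)$, set $\tfrac1s=\tfrac1r+\tfrac12$ with $r<\infty$, and use $H^1(\Omega)\hookrightarrow L^r(\Omega)$ to obtain $\aiminnorm{\boldsymbol u\cdot\nabla\boldsymbol u}_{L^s}\leq\aiminnorm{\boldsymbol u}_{L^r}\aiminnorm{\nabla\boldsymbol u}_{L^2}\leq C\aiminnorm{\boldsymbol u}_{H^1}^2$. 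Since $\boldsymbol f\in L^2(0,t_1;W^{1,p})\hookrightarrow L^2(0,t_1;L^{s(p)})$ on the bounded domain, collecting these bounds delivers the claimed $L^2(0,t_1;L^{s(p)})$-control on $\partial_t\boldsymbol u$, and absorbing the constants into $\mathcal Q_8$ gives \eqref{eqa.4}.

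Finally, \eqref{eqa.6} costs essentially nothing more: the bounds just established show that the nonlinear piece $\mathrm{P}_\Omega(\boldsymbol u\cdot\nabla\boldsymbol u)$ already lies in $L^\infty(0,t_1;L^{s(p)})$, so the time integrability of $\partial_t\boldsymbol u$ is governed solely by that of the forcing; if $\boldsymbol f\in L^q(0,t_1;W^{1,p})$ with $q\geq2$, then $\boldsymbol f\in L^q(0,t_1;L^{s(p)})$ and hence $\partial_t\boldsymbol u\in L^q(0,t_1;L^{s(p)})$ with the stated bound. The only genuinely delicate point in the whole argument is the endpoint $p=2$, where the loss of the $L^\infty$-embedding forces one down to $L^s$ with $s<2$ and thereby motivates the very definition of $s(p)$; everything else rests on the Biot--Savart and projection estimates already secured for polygonal-like domains in \eqref{eq1.3.4} and Proposition \ref{prop1.1.1}.
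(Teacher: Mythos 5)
Your proposal is correct and follows essentially the same route as the paper's own proof: the $L^p$ vorticity estimate via testing against $|\omega|^{p-2}\omega$ (with the advection term vanishing by incompressibility and tangency), the Biot--Savart law \eqref{eq1.3.4} for the $W^{1,p}$ velocity bound, the Leray projection $\mathrm{P}_\Omega$ applied to \eqref{eqa.1} together with the Sobolev/H\"older splitting at $p=2$ that motivates $s(p)$, and the observation that the nonlinear term is bounded in $L^\infty(0,t_1;L^{s(p)})$ so the time integrability of $\partial_t\boldsymbol u$ is dictated by that of $\boldsymbol f$. The only cosmetic difference is your explicit remark about justifying the computations on smooth approximations, which the paper leaves implicit.
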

We present the proof of Proposition \ref{propa.1} for the sake of completeness.
\begin{proof}[Proof of Proposition \ref{propa.1}]
Multiplying \eqref{eqa.2} by $p\aiminabs{\omega}^{p-2}\omega$, integrating on $\Omega$, and using the fact
\[
p\aimininner{\boldsymbol u\cdot\nabla \omega}{ \aiminabs{\omega}^{p-2}\omega } =\int_\Omega \boldsymbol u\cdot \nabla \aiminabs{\omega}^p{\mathrm{d}x}{\mathrm{d}y}= 0,
\]
which stems from that $\boldsymbol u$ is divergence free and has zero normal component on $\partial\Omega$,
 we obtain
\[
\frac{\mathrm{d}}{\mathrm{d}t}\aiminnorm{\omega}_{L^p}^p = p\aimininner{{\mathrm{curl}\,}\boldsymbol f}{\aiminabs{\omega}^{p-2}\omega}
\leq p\aiminnorm{{\mathrm{curl}\,} \boldsymbol f}_{L^p} \aiminnorm{\omega}_{L^p}^{p-1},
\]
which implies
\[
\frac{\mathrm{d}}{\mathrm{d}t}\aiminnorm{\omega}_{L^p} \leq \aiminnorm{{\mathrm{curl}\,} \boldsymbol f}_{L^p},
\]
Integrating in time $(0, t)$ and then taking the sup over $[0,t_1]$ show the desired estimate \eqref{eqa.3}.
Now, the Biot-Savart law \eqref{eq1.3.4} yields
\begin{equation}\label{eqa.10}
	\boldsymbol u=\boldsymbol{\mathrm{K}}_\Omega \omega\in L^\infty(0, t_1; W^{1,p}(\Omega)\cap L_{\boldsymbol\tau}^{2}(\Omega)),\qquad
	\aiminnorm{\boldsymbol u}_{L^\infty(0, t_1; W^{1,p}(\Omega))}\leq \mathcal Q_8.
\end{equation}
where $\mathcal Q_8$ is defined by \eqref{eqa.5}.

Applying $\mathrm{P}_\Omega$ on \eqref{eqa.1} gives the identity
\begin{equation}\label{eqa.12}
\partial_t\boldsymbol u=\mathrm{P}_\Omega( - \boldsymbol u\cdot\nabla\boldsymbol u + \boldsymbol f).
\end{equation}
First, by the  Sobolev embedding and \eqref{eqa.10}, we obtain
\begin{equation*}
	\aiminnorm{\boldsymbol u}_{L^\infty(0, t_1; L^r(\Omega))}
	\lesssim \aiminnorm{\boldsymbol u}_{L^\infty(0, t_1; W^{1,p}(\Omega))}
	\leq \mathcal Q_8,\qquad r=\begin{cases}
		s(p)^*, &p=2,\\
		\infty, &p>2,\\
	\end{cases}
\end{equation*}
where  $s(p)^*$ are the Sobolev conjugate exponent $s(p)$, that is
\[
s(p)^*=\frac{2s(p)}{2-s(p)},\qquad 1\leq s(p) <2,\\
\]
and we then deduce from H\"older's inequality that
\begin{equation}\label{eqa.13}
	\aiminnorm{\boldsymbol u\cdot\nabla\boldsymbol u}_{L^\infty(0, t_1; L^{s(p))}(\Omega)} \leq \aiminnorm{\boldsymbol u}_{L^\infty(0, t_1; L^r(\Omega))}\aiminnorm{\nabla\boldsymbol u}_{L^\infty(0, t_1; L^p(\Omega))}
	\leq \mathcal Q_8.
\end{equation}
Proposition~\ref{prop1.1.1} guarantees that $\mathrm{P}_\Omega$ is a linear bounded operator on $L^{s(p)}$ and we find from \eqref{eqa.12}-\eqref{eqa.13} that
\begin{equation*}
\begin{split}
	\aiminnorm{\partial_t\boldsymbol u}_{L^q(0, t_1; L^{s(p)}(\Omega))}
	&\leq \aiminnorm{\boldsymbol u\cdot\nabla\boldsymbol u}_{L^q(0, t_1; L^{s(p)}(\Omega))} + \aiminnorm{\boldsymbol f}_{L^q(0, t_1; L^{s(p)}(\Omega))}\\
	&\leq \max(t_1, 1)\aiminnorm{\boldsymbol u\cdot\nabla\boldsymbol u}_{L^\infty(0, t_1; L^{s(p)}(\Omega))} + \aiminnorm{\boldsymbol f}_{L^q(0, t_1; L^{s(p)}(\Omega))},
\end{split}
\end{equation*}
which implies \eqref{eqa.4} by the estimate \eqref{eqa.10} and letting $q=2$ and also implies \eqref{eqa.6} if $\boldsymbol f$ belongs to $L^q(0, t_1; W^{1,p}(\Omega))$. This ends the proof of Proposition~\ref{propa.1}.
\end{proof}

\bibliographystyle{amsalpha}
\providecommand{\bysame}{\leavevmode\hbox to3em{\hrulefill}\thinspace}
\providecommand{\MR}{\relax\ifhmode\unskip\space\fi MR }
\providecommand{\MRhref}[2]{%
  \href{http://www.ams.org/mathscinet-getitem?mr=#1}{#2}
}
\providecommand{\href}[2]{#2}

\end{document}